\newtheorem{theorem}{Theorem}[section]
\newtheorem{lemma}[theorem]{Lemma}
\theoremstyle{definition}
\newtheorem{definition}[theorem]{Definition}
\theoremstyle{remark}
\newtheorem{remark}[theorem]{Remark}
\numberwithin{equation}{section}
\newcommand{\abs}[1]{\lvert#1\rvert}
\begin{document}

\title {The mean value problem of Smale's problems}

\author{Lande Ma}
\curraddr{School of Mathematical Sciences, Tongji University, Shanghai, 200092, China}
\email{dzy200408@126.com}
\thanks{}

\author{ZhaoKun Ma}
\address{}
\curraddr{YanZhou College, ShanDong Radio and TV University, YanZhou, ShanDong 272100 China}
\email{dzy200408@sina.cn}
\thanks{}

\subjclass[2020]{Primary 30C15,30C10 Secondary 30D30,93C05}

\date{December 13th, 2021.}

\dedicatory{}

\keywords{Smale’s problems; mean value problem; critical point; root locus; zeros;}

\begin{abstract}
We show two results of mean value problem, Smale's mean value problem is comprehensively solved in this paper.
\end{abstract}

\maketitle

\section*{Introduction}
In 1998, Smale proposed some mathematics problems which need to be solved\cite{Smale1} to a request from Vladimir Arnold, to propose a list of problems for the 21st century. The mean value problem on the list is still open in full generality. Early in 1981, Smale posed the mean value problem of complex polynomial\cite{Smale2}: Given a complex polynomial $f$ of degree $d\geq 2$ and a complex number $s$, is there a critical point $\theta$ of $f$(ie, $f^{'}(\theta)=0$) such that $\frac{\abs{f(s)-f(\theta)}}{\abs{s-\theta}}\le c\abs{f^{'}(s)}$, $c=1$. Mathematicians have obtained some partial results\cite{Beardon,Miles,Aimo}.

In this paper, through the root locus method established by us. We prove two results:
Given a complex polynomial $f$ of degree $d\geq 2$ and a complex number $s$.
For every critical point $\theta_i$ (ie, $f^{'}(\theta_i)=0$), $i=1,2,\cdots, n-1$, there exists its adjacent domain $\Omega_i$.
For all points in the extended complex plane except the adjacent domain $\Omega=\{\Omega_i\}$, $i=1,2,\cdots, n-1$, such that $\frac{\abs{f(s)-f(\theta_i)}}{\abs{s-\theta_i}}\le \abs{f^{'}(s)}$.

Namely, the inequality in Smale's mean value problem is true.

Given a complex polynomial $f$ of degree $d\geq 2$ and a complex number $s$.
For every critical point $\theta_i$ (ie, $f^{'}(\theta_i)=0$), $i=1,2,\cdots, n-1$, there exists its adjacent domain $\Omega_i$.
For all points in the adjacent domain $\Omega=\{\Omega_i\}$, $i=1,2,\cdots, n-1$, such that $\frac{\abs{f(s)-f(\theta_i)}}{\abs{s-\theta_i}}> \abs{f^{'}(s)}$.

Namely, the inequality in Smale's mean value problem is true, when symbol in inequality is converse.
\section{Root Locus Of the meromorphic function $W(s)$ and Their Properties}
In  textbooks of \emph{automatic control theory}, the factor at the left side of the root locus equation is the rational fraction function of the constant coefficient. The root locus equation is only concerning two degree of 0 and 180 degree numbers which obtains the real number values\cite{Richard}. So, the root locus equations and the results of the root locus in \emph{automatic control theory} are all very special and limitted. The proofs of results in \emph{automatic control theory} are not comprehensive and accurate.

We need to study the more general root locus equations of the the meromorphic function $W(s)$. Let
$K=|\frac{\prod^{n}_{j=1}(1-\frac{s}{p_{j}})^{\beta_{j}}GP_{j}(s)}
{G(s)\prod^{m}_{l=1}(1-\frac{s}{z_{l}})^{\gamma_{l}}GZ_{l}(s)}|$. The $K$ is the reciprocal of the modulus of  meromorphic function $W(s)=u(\sigma,t)+iv(\sigma,t)$. After $K$ and meromorphic function
$W(s)$ are multiplied, the product result is the unit complex value of meromorphic function $W(s)$.

\begin{definition}
In the extended complex plane $\mathbb{C}\cup\{\infty\}$, the equation (1.1) is called as \emph{the root locus equation}.
\begin{equation}
KG(s)\frac{\prod^{m}_{l=1}(1-\frac{s}{z_{l}})^{\gamma_{l}}G_{lz}(s)}
{\prod^{n}_{j=1}(1-\frac{s}{p_{j}})^{\beta_{j}}G_{jp}(s)}=a+ib
\end{equation}
\end{definition}
In which, $a+ib$ is the unit complex number value of meromorphic function $W(s)$. $\alpha=2q\pi+\arg(\frac{b}{a})$.
The zeros $z_{l}$ and poles $p_{j}$ are points in $\mathbb{C}\cup\{\infty\}$, and may be no conjugate.

\begin{lemma}
After the coincident and finite zeros and poles of Eq(1.1) are cancelled, for any finite zero of Eq(1.1), its $K$ value is $K=+\infty$; for any finite pole of Eq(1.1), its $K$ value is $K=0$.
\end{lemma}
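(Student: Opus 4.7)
The plan is to read off the conclusion directly from the definition of $K$, using the cancellation hypothesis only to rule out spurious interference between factors in the numerator and denominator.

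First, I would rewrite $K$ in the form
\begin{equation*}
K(s)=\frac{\bigl|\prod_{j=1}^{n}(1-s/p_{j})^{\beta_{j}}\,GP_{j}(s)\bigr|}{\bigl|G(s)\bigr|\,\bigl|\prod_{l=1}^{m}(1-s/z_{l})^{\gamma_{l}}\,GZ_{l}(s)\bigr|},
\end{equation*}
so that $K(s)=1/|W(s)|$. Then the claim in the lemma is simply the statement that $K$ blows up at the finite zeros of $W$ and vanishes at the finite poles of $W$. I would emphasize that the hypothesis ``after the coincident finite zeros and poles have been cancelled'' is exactly what lets us identify the zero/pole set of $W$ with the unreduced factors $(1-s/z_{l})^{\gamma_{l}}$ and $(1-s/p_{j})^{\beta_{j}}$ appearing in the explicit product.

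Next I would handle the two cases. For a finite zero $s=z_{l_{0}}$ of Eq.~(1.1) in the reduced form, the factor $(1-s/z_{l_{0}})^{\gamma_{l_{0}}}$ sits in the denominator of the above expression for $K$ and vanishes there; by the cancellation hypothesis no factor in the numerator vanishes to compensate, and the auxiliary factors $G$, $GP_{j}$, $GZ_{l}$ are by assumption finite and nonzero at $s=z_{l_{0}}$ (this is really the role the ``coincident zeros/poles cancelled'' clause plays). Hence $K(z_{l_{0}})=+\infty$. Symmetrically, for a finite pole $s=p_{j_{0}}$ of the reduced equation, the factor $(1-s/p_{j_{0}})^{\beta_{j_{0}}}$ sits in the numerator of $K$ and vanishes, with nothing in the denominator to cancel it, giving $K(p_{j_{0}})=0$.

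The computation itself is almost tautological once one unwinds definitions, so the only real obstacle is a bookkeeping one: checking that the correspondence between the symbols used in the definition of $K$ (namely $GP_{j},GZ_{l}$) and those used in Eq.~(1.1) ($G_{jp},G_{lz}$) is the intended one, and that the cancellation hypothesis is strong enough to prevent any hidden zero of the numerator at a residual zero $z_{l}$ (or hidden pole of the denominator at a residual pole $p_{j}$). Once that is made explicit, both cases of the lemma follow immediately from substitution.
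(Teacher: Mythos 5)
Your proposal is correct, but it takes a different route from the paper. You read the conclusion directly off the definition $K(s)=1/|W(s)|$: at a finite zero of the reduced equation the vanishing factor sits in the denominator of $K$ with nothing to compensate it, so $K=+\infty$, and symmetrically $K=0$ at a finite pole. The paper instead uses the classical root-locus ``characteristic equation'' argument: it clears denominators to get $KG(s)\prod_{l}(1-s/z_{l})^{\gamma_{l}}G_{lz}(s)-(a+ib)\prod_{j}(1-s/p_{j})^{\beta_{j}}G_{jp}(s)=0$, observes that setting $K=0$ reduces this to the vanishing of the pole factor, and concludes that the poles carry $K=0$; dividing through by $K$ and letting $K=+\infty$ gives the zero case. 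Your version is the more elementary and the more watertight of the two: it is a pointwise evaluation of a modulus, whereas the paper's argument is really a statement about which roots of the characteristic equation survive in the limits $K\to 0$ and $K\to+\infty$, and it quietly identifies ``the roots of the equation when $K=0$'' with ``the value of $K$ at those points,'' a step that is only justified by exactly the direct computation you perform. You also correctly isolate the one place where the hypothesis does real work --- ruling out a compensating zero or pole among the auxiliary factors --- which the paper's proof does not make explicit. The only thing your write-up leaves implicit that you should state is that the auxiliary factors $G$, $G_{jp}$, $G_{lz}$ are assumed nonvanishing and finite at the residual zeros and poles; the paper never states this either, but both proofs need it.
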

\begin{proof}
The Eq(1.1) can be transformed to the following characteristic equation.

$KG(s)\prod^{m}_{l=1}(1-\frac{s}{z_{l}})^{\gamma_{l}}G_{lz}(s)-
(a+ib)\prod^{n}_{j=1}(1-\frac{s}{p_{j}})^{\beta_{j}}G_{jp}(s)=0$. Obviously, if $K=0$, all roots of the pole factor $\prod^{n}_{j=1}(1-\frac{s}{p_{j}})^{\beta_{j}}G_{jp}(s)$ are the roots of characteristic equation. Conversely, all the roots of characteristic equation if $K=0$ are all of the roots of the pole factor. So, at poles of Eq(1.1), $K=0$.

The Eq(1.1) can be transformed to its another characteristic equation.
$G(s)\prod^{m}_{l=1}$$(1-\frac{s}{z_{l}})^{\gamma_{l}}G_{lz}(s)-
\frac{a+ib}{K}\prod^{n}_{j=1}(1-\frac{s}{p_{j}})^{\beta_{j}}G_{jp}(s)=0$. If $K=+\infty$, all roots of the zero factor $\prod^{m}_{l=1}(1-\frac{s}{z_{l}})^{\gamma_{l}}G_{lz}(s)$ are the roots of the characteristic equation. Conversely, all roots of the characteristic equation when $K=+\infty$ are the roots of the zero factor. So, at zeros of Eq(1.1), $K=+\infty$.
\end{proof}

According to the expression of $K$ of the root locus equation (1.1) and according to Lemma 1.2, it is obvious that the $K$ values are continuous concerning the complex variable $s$ in $\mathbb{C}\cup\{\infty\}$.
So, we can prove Theorem 1.3 simply.

\begin{theorem}
Let $s\in\mathbb{C}\cup\{\infty\}$, the $K$ value of $s\in \mathbb{C}\cup\{\infty\}$ takes all of the non-negative real number value from 0 to the $+\infty$.
\end{theorem}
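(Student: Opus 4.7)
The plan is to apply the intermediate value theorem to the continuous function $K$ on the path-connected extended complex plane. The paragraph preceding the theorem already supplies the crucial input: $K$ is continuous on $\mathbb{C}\cup\{\infty\}$ as a function with values in $[0,+\infty]$. Lemma 1.2 then pins down the two extreme values, namely $K(p)=0$ at a finite pole of Eq.~(1.1) and $K(z)=+\infty$ at a finite zero. I would first note that after the common-factor cancellation assumed in Lemma 1.2 both a finite pole and a finite zero exist (otherwise the root locus equation degenerates and the statement is either vacuous or trivial), so these two endpoint values are already attained.

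Next, I would exploit that $\mathbb{C}\cup\{\infty\}$ is homeomorphic to the Riemann sphere, hence path-connected, and choose a continuous path $\gamma:[0,1]\to\mathbb{C}\cup\{\infty\}$ joining $p$ to $z$ that avoids the remaining finitely many zeros and poles of Eq.~(1.1) in its interior (a generic path will do, since the exceptional set is finite and hence nowhere dense on the sphere). Along $\gamma$ the composition $K\circ\gamma$ is continuous from $[0,1]$ into the extended half-line $[0,+\infty]$, and satisfies $K(\gamma(0))=0$ together with $K(\gamma(1))=+\infty$.

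Finally, I would compactify the target by setting $h(t):=K(\gamma(t))/\bigl(1+K(\gamma(t))\bigr)$, with $h(1):=1$; this is a genuine continuous map $[0,1]\to[0,1]$ with $h(0)=0$ and $h(1)=1$. The classical intermediate value theorem then shows $h$ attains every value in $[0,1]$, and the monotone bijection $x\mapsto x/(1+x)$ from $[0,+\infty]$ to $[0,1]$ transports this back to the assertion that $K\circ\gamma$, a fortiori $K$ itself on $\mathbb{C}\cup\{\infty\}$, attains every value in $[0,+\infty]$. The only delicate point---and the one the excerpt glosses over with ``simply''---is reconciling the value $+\infty$ with the real-variable IVT; this is precisely what the substitution $K\mapsto K/(1+K)$ handles in one line, with the equivalent workaround of applying the ordinary IVT on intervals $[0,T]\subset[0,+\infty)$ and letting $T\to+\infty$.
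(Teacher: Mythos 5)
Your proposal is correct and matches the paper's (entirely implicit) argument: the paper offers no proof of this theorem beyond asserting that $K$ is continuous on $\mathbb{C}\cup\{\infty\}$ and invoking Lemma 1.2 for the extreme values $0$ and $+\infty$, so your intermediate-value argument along a path on the sphere, with the compactification $K\mapsto K/(1+K)$, is precisely the justification the paper says can be done ``simply.'' The one point to adjust is your parenthetical claim that a finite pole and a finite zero must both exist after cancellation --- when $W$ is, say, a polynomial the pole sits at $\infty$ and the statement is neither vacuous nor trivial --- so the endpoints of your path should be allowed to be the infinite zero or pole (which the paper's Lemma 1.16 accommodates), after which the argument goes through unchanged.
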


In complex analysis, the phase angle is namely argument\cite{Stein}. In \emph{automatic control theory}, it is called as the phase angle.
Here, we need to use name of the phase angle. In this paper, $K$ is called as the gain.

Let $\Delta=\{s=\sigma+it\in\mathbb{C}\cup\{\infty\}$, $s$ is not zero or pole of $W(s)$ and $Eq(1.1)\}$.

\begin{lemma}
For any point $s\in\Delta$, the phase angle of meromorphic function $W(s)$ can be written as the $\varphi(\sigma,t)$ function:
$\varphi(\sigma,t)=\arg(\frac{G_{y}(\sigma,t)}{G_{x}(\sigma,t)})+
\sum^{m}_{l=1}(\gamma_{l}\arg(\frac{t-t_{l}}{\sigma-\sigma_{l}})-\gamma_{l}\arg(\frac{t_{l}}{\sigma_{l}})+
\arg(\frac{G_{zy}(\sigma,t)}{G_{zx}(\sigma,t)}))
-\sum^{n}_{j=1}(\beta_{j}\arg(\frac{t-t_{j}}{\sigma-\sigma_{j}})-\beta_{j}\arg(\frac{t_{j}}{\sigma_{j}})+
\arg(\frac{G_{py}(\sigma,t)}{G_{px}(\sigma,t)}))$.
\end{lemma}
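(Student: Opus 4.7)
The plan is to compute $\arg W(s)$ directly from its factored form, using that $\arg$ is additive over products and quotients and that $\arg(f^{\gamma})=\gamma\arg f$ for integer exponents, working modulo $2\pi$ throughout. Since $K$ is a positive real scalar and $K\cdot W(s)=a+ib$, one has $\arg W(s)=\arg(a+ib)$, so the target $\varphi(\sigma,t)$ is nothing but $\arg W(s)$ expressed in the real coordinates $s=\sigma+it$.

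Next I would split $\arg W(s)$ over the numerator and denominator of
\[
W(s)=G(s)\,\frac{\prod_{l=1}^{m}(1-s/z_{l})^{\gamma_{l}}\,G_{lz}(s)}{\prod_{j=1}^{n}(1-s/p_{j})^{\beta_{j}}\,G_{jp}(s)}.
\]
This yields $\arg G(s)$ plus a sum $\sum_{l=1}^{m}\bigl(\gamma_{l}\arg(1-s/z_{l})+\arg G_{lz}(s)\bigr)$ from the zeros, and a corresponding sum over the poles entering with a minus sign. Because $s\in\Delta$ avoids all zeros and poles of $W$ and of Eq.~(1.1), every factor is finite and nonzero, so each individual $\arg$ is well defined modulo $2\pi$.

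Then for each linear factor I would write $1-s/c=(c-s)/c$ so that $\arg(1-s/c)=\arg(c-s)-\arg(c)$. With $c=\sigma_{c}+it_{c}$ and $s=\sigma+it$, this becomes $\arg((t-t_{c})/(\sigma-\sigma_{c}))-\arg(t_{c}/\sigma_{c})$ in the paper's notation, where $\arg(y/x)$ is read as the argument of the complex number $x+iy$. The residual analytic blocks $G_{lz}$, $G_{jp}$, $G$ are decomposed the same way by introducing their real and imaginary parts $G_{zx},G_{zy}$ and the analogous pole and base quantities, producing the $\arg(G_{zy}/G_{zx})$-type summands that appear in the statement.

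The main obstacle will be branch bookkeeping: the additivity of $\arg$ holds only modulo $2\pi$, and this is precisely what the quantity $\alpha=2q\pi+\arg(b/a)$ preceding the lemma is designed to absorb. I would have to verify that the subtracted reference angles $\gamma_{l}\arg(t_{l}/\sigma_{l})$ and $\beta_{j}\arg(t_{j}/\sigma_{j})$ normalise each factor at $s=0$ consistently with the same branch choice, so that the $2\pi$-jumps of individual factors aggregate into the single $2q\pi$ and do not contaminate the listed summands. Once this is checked, summing all the contributions with the appropriate signs yields precisely the formula for $\varphi(\sigma,t)$ claimed in the lemma.
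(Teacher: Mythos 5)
The paper states Lemma 1.4 as a bare assertion and supplies no proof at all --- the text proceeds directly from the lemma to the phase angle condition equation (1.2) --- so there is no argument of the authors' to compare yours against. Your route (additivity of $\arg$ over products and quotients, one term per factor of $W$, with the $2q\pi$ in $\alpha=2q\pi+\arg(b/a)$ absorbing the modulo-$2\pi$ ambiguity) is the natural and essentially the only way to establish the formula, and it correctly identifies where the real work lies.

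One step in your sketch is stated too quickly and would not survive careful checking as written. You write $1-s/c=(c-s)/c$, hence $\arg(1-s/c)=\arg(c-s)-\arg(c)$, and then assert that this ``becomes'' $\arg\bigl((t-t_c)/(\sigma-\sigma_c)\bigr)-\arg(t_c/\sigma_c)$ in the paper's notation. But the paper's $\arg\bigl((t-t_c)/(\sigma-\sigma_c)\bigr)$ is the argument of the point $(\sigma-\sigma_c,\,t-t_c)$, i.e.\ of $s-c$, not of $c-s$; the two differ by $\pi$. Each zero factor therefore contributes an extra $\gamma_l\pi$ and each pole factor an extra $\beta_j\pi$, and the total discrepancy $\pi\bigl(\sum_l\gamma_l-\sum_j\beta_j\bigr)$ is a multiple of $2\pi$ only when that integer is even, so it is not automatically swallowed by the $2q\pi$. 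To close the proof you must either carry these $\pi$'s explicitly and show they cancel (or state the parity hypothesis under which they do), or rewrite each summand as the argument of $c-s$ rather than $s-c$, in which case you are proving a corrected version of the formula rather than the one printed in the lemma. This is exactly the ``branch bookkeeping'' you flag as the main obstacle, but it is a concrete additive constant, not merely a choice of branch, and it needs to be resolved rather than deferred.
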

Let $s=(\sigma,t)\in\Delta$, then the phase angle condition equation of Eq(1.1) on $s$ is:
\begin{equation}
\varphi(\sigma,t)=2q\pi+arg{(b/a)}
\end{equation}
In which, $q$ is an integer number.
\begin{lemma}
Let Eq(1.1) be the root locus equation of $2q\pi+\alpha$ degree, for any point $s=(\sigma,t)\in\Delta$, if the point $s$ satisfies the phase angle condition equation (1.2) of $2q\pi+\alpha$ degree, then it must be on the root locus of $2q\pi+\alpha$ degree of Eq(1.1).
\end{lemma}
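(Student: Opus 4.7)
The plan is to exploit the separation between modulus and argument that underlies Eq.~(1.1): since $K$ is real and non-negative, it contributes nothing to the phase of the left-hand side, so Eq.~(1.1) decouples into a phase/argument condition and a modulus condition, the latter of which can always be satisfied by a suitable choice of $K$.

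First I would note that for $s=(\sigma,t)\in\Delta$, neither zeros nor poles of $W(s)$ are present, so the factorization
\[
KG(s)\frac{\prod^{m}_{l=1}(1-\tfrac{s}{z_{l}})^{\gamma_{l}}G_{lz}(s)}{\prod^{n}_{j=1}(1-\tfrac{s}{p_{j}})^{\beta_{j}}G_{jp}(s)}
\]
has a well-defined, finite, nonzero modulus and a well-defined argument. Taking arguments of both sides of (1.1), and using that $\arg(K)=0$ for $K>0$, the argument of the left-hand side collapses to exactly the expression $\varphi(\sigma,t)$ computed in Lemma~1.4, while the argument of the right-hand side is $\arg(a+ib)=2q\pi+\arg(b/a)$. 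Hence the phase angle condition (1.2) is equivalent to the equality of arguments in Eq.~(1.1) at the point $s$.

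Next I would settle the modulus side. By Theorem~1.3, the gain $K(s)$ as a function of $s\in\mathbb{C}\cup\{\infty\}$ attains every non-negative real value from $0$ to $+\infty$; equivalently, at the particular $s\in\Delta$ under consideration, the value $K=1/|W(s)|$ is a finite positive real number, consistent with the defining relation of $K$ given before Definition~1.1. With this choice of $K$, the left-hand side of (1.1) has modulus equal to $1=|a+ib|$.

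Combining the two ingredients: with $K=1/|W(s)|$ and with the phase angle condition (1.2) in force, the left-hand side of Eq.~(1.1) and the right-hand side agree in both modulus and argument, and therefore agree as complex numbers. This means $s$ satisfies Eq.~(1.1) for the chosen $K$, so by the definition of the root locus, $s$ lies on the root locus of $2q\pi+\alpha$ degree. The one step that requires care, rather than any genuine obstacle, is justifying the clean additive decomposition of arguments used in Lemma~1.4 at $s\in\Delta$; this is exactly why the lemma is phrased on $\Delta$, where the multi-valued $\arg$ branches needed for each factor are well-defined and finite.
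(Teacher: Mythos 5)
Your proposal is correct and follows essentially the same route as the paper's own proof: both decompose $W(s)$ at the given point into modulus times unit complex value, take the gain $K$ to be the reciprocal of that modulus so the moduli of the two sides of Eq.~(1.1) match, and use the phase angle condition (1.2) to match the arguments, concluding that the point satisfies Eq.~(1.1). The only cosmetic difference is that you phrase the matching explicitly as ``equal modulus and equal argument imply equal complex numbers,'' which is a slightly cleaner packaging of the same argument.
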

\begin{proof}
Assuming that a point $s_1=(\sigma_1,t_1)\in\Delta$ satisfies the phase angle condition
equation (1.2), $\varphi(\sigma_1,t_1)=2q\pi+arg{(b_1/a_1)}$. The phase angle of $W(s)$ is $\varphi(\sigma,t)$, this phase
angle expression is same as the expression of the left side of
Eq(1.2).

Because the point $s_{1}$ satisfies the phase angle condition
equation (1.2), when the point $s_{1}$ is substituted into the
function $W(s)$, $W(s)$ obtains a complex value, according to the
phase angle expression of Eq(1.2) of the point $s_{1}$ and the
condition which a point $s_{1}$ satisfies the phase angle condition
equation (1.2), this complex value can be written
as: $K_1^{*}(a_{1}+ib_{1})$, and  $\alpha_1=\arg{(b_1/a_1)}$,
$K_1^{*}$ is the modulus of function $W(s)$ of point $s_{1}$.
$K_1^{*}$ is a non-zero positive real value. $\alpha_1$ is the phase
angle of the function $W(s)$ of the point $s_{1}$, and it is the value
of the right side of Eq(1.2).

If we bring the point $s_1$ into the gain expression $|\frac{\prod^{n}_{j=1}(1-\frac{s}{p_{j}})^{\beta_{j}}G_{jp}(s)}
{G(s)\prod^{m}_{l=1}(1-\frac{s}{z_{l}})^{\gamma_{l}}G_{lz}(s)}|$, according to the definition of the gain, we can obtain
an unique gain $K_1$, $K_1=|\frac{\prod^{n}_{j=1}(1-\frac{s_1}{p_{j}})^{\beta_{j}}G_{jp}(s_1)}
{G(s_1)\prod^{m}_{l=1}(1-\frac{s_1}{z_{l}})^{\gamma_{l}}G_{lz}(s_1)}|$, the gain $K_1$ is a
reciprocal of the modulus $K_1^{*}$ of $W(s)$ of the point $s_1$.
Further obtain $K_1^{*}$. $K_1^{*}=1/K_1$. For that the gain $K_1$
multiplied the expression $G(s_1)\prod^{m}_{l=1}(1-\frac{s_1}{z_{l}})^{\gamma_{l}}G_{lz}(s_1)/\prod^{n}_{j=1}(1-\frac{s_1}{p_{j}})^{\beta_{j}}G_{jp}(s_1)$, we have,
$K_1G(s_1)\prod^{m}_{l=1}(1-\frac{s_1}{z_{l}})^{\gamma_{l}}G_{lz}(s_1)/\prod^{n}_{j=1}(1-\frac{s_1}{p_{j}})^{\beta_{j}}G_{jp}(s_1)=a_1+ib_1$. This equation is a concrete situation which a point $s_1$ satisfies Eq(1.1). The previous results prove
that the point $s_1$ is a root of Eq(1.1).

So, the point $s_1$ satisfies Eq(1.1). Therefore, it is proved
that the point $s_1$  is on the root locus of Eq(1.1), and which its gain is $K_1$ and its degree is
$2q\pi+\alpha_1$. Hence, if the point $s_1$ satisfies Eq(1.2), then
the point $s_1$ is on the root locus of Eq(1.1), and which its
gain is $K_1$ and its degree is $2q\pi+\alpha_1$.
\end{proof}

\begin{lemma}
Let Eq(1.1) be the root locus equation of $2q\pi+\alpha$ degree, for any point $s=(\sigma,t)\in\Delta$, if the point $s$ is on the root locus of $2q\pi+\alpha$  degree of Eq(1.1), then it must satisfy the phase angle condition equation
of $2q\pi+\alpha$ degree of Eq(1.2).
\end{lemma}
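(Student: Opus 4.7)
The plan is to proceed by reversing the argument given in Lemma 1.5, using the fact that equation (1.1) is a single complex equation whose real content, once the positive real gain is stripped off, is exactly the phase angle equation (1.2).

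First I would take an arbitrary $s_{1}=(\sigma_{1},t_{1})\in\Delta$ lying on the root locus of degree $2q\pi+\alpha$ of Eq(1.1). By the definition of the root locus, this means there exists a positive real gain $K_{1}$ and complex numbers $a_{1},b_{1}$ with $\arg(b_{1}/a_{1})\equiv\alpha_{1}\pmod{2\pi}$ such that, when $s_{1}$ is substituted into (1.1), the equation holds as an identity in $\mathbb{C}$. Since $s_{1}\in\Delta$, it is neither a zero nor a pole of $W(s)$ nor of Eq(1.1), so by Lemma 1.2 the gain $K_{1}$ is a finite positive real number and the factor on the left of (1.1) is a well-defined nonzero complex number.

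Next I would take the argument of both sides of (1.1) evaluated at $s_{1}$. Because $K_{1}>0$ is real, it contributes nothing to the argument, so $\arg$ of the left-hand side equals the argument of $G(s_{1})\prod^{m}_{l=1}(1-s_{1}/z_{l})^{\gamma_{l}}G_{lz}(s_{1})/\prod^{n}_{j=1}(1-s_{1}/p_{j})^{\beta_{j}}G_{jp}(s_{1})$. By Lemma 1.4, this is exactly $\varphi(\sigma_{1},t_{1})$. On the right-hand side, $\arg(a_{1}+ib_{1})=2q\pi+\arg(b_{1}/a_{1})$ for some integer $q$. Equating the two sides yields
\begin{equation*}
\varphi(\sigma_{1},t_{1})=2q\pi+\arg(b_{1}/a_{1}),
\end{equation*}
which is precisely the phase angle condition (1.2) of degree $2q\pi+\alpha$ at $s_{1}$.

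The main subtlety, rather than an obstacle, is bookkeeping of the multivalued argument: one must choose the branch on each factor consistently so that the sum in Lemma 1.4 represents the same integer multiple of $2\pi$ as appears on the right-hand side of (1.2); this is absorbed precisely by the freedom in the integer $q$. Once this is handled, the converse direction follows immediately from the fact that taking $\arg$ of (1.1) reproduces (1.2) term by term, completing the proof.
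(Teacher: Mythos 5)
Your proposal is correct and follows essentially the same route as the paper's own proof: substitute the point into Eq(1.1), split the left side into the positive real gain (which contributes only a multiple of $2\pi$ to the argument) and the meromorphic factor, identify the latter's argument with $\varphi(\sigma,t)$ via Lemma 1.4, and equate with $\arg(a+ib)$ on the right, absorbing branch ambiguity into the integer $q$. No substantive difference from the paper's argument.
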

\begin{proof}
Assume that the point $s_2=(\sigma_2,t_2)\in\Delta$ is an arbitrary point on the root locus of Eq(1.1). The points which satisfy Eq(1.1) are
all on the root locus of Eq(1.1). So, the point $s_2$
surely satisfies Eq(1.1). When $s_2$ is substituted into
Eq(1.1), $K_2G(s_2)\prod^{m}_{l=1}(1-\frac{s_2}{z_{l}})^{\gamma_{l}}G_{lz}(s_2)/\prod^{n}_{j=1}(1-\frac{s_2}{p_{j}})^{\beta_{j}}G_{jp}(s_2)=a_2+ib_2$.
According to Theorem 1.3, here, the gain $K_2$ is a positive real
number, its phase angle of $K_2$ is $2q_1\pi$, the phase angle of
the right side of the equation in this paragraph on the point $s_2$
is $2q_2\pi+\arg{(b_2/a_2)}=2q_2\pi+\alpha_2$.

The factor of the left side of the equation in last paragraph on the
point $s_2$ can be looked as two factors. One is $K_2$, another is
$G(s_2)\prod^{m}_{l=1}(1-\frac{s_2}{z_{l}})^{\gamma_{l}}G_{lz}(s_2)/\prod^{n}_{j=1}(1-\frac{s_2}{p_{j}})^{\beta_{j}}G_{jp}(s_2)$.
The phase angle of the factor of the left side of the equation in
last paragraph on the point $s_2$ is equal to the summation of two
phase angles of two factors of $K_2$  and $G(s_2)\prod^{m}_{l=1}(1-\frac{s_2}{z_{l}})^{\gamma_{l}}G_{lz}(s_2)/\prod^{n}_{j=1}(1-\frac{s_2}{p_{j}})^{\beta_{j}}G_{jp}(s_2)$. The phase angle of the
factor of the left side of the equation in last paragraph on the
point $s_2$ is equal to the phase angle of the right side of the
equation in the last paragraph on the point $s_2$. The phase angle
$2q_2\pi+\alpha_2$ subtracts the phase angle $2q_1\pi$  is equal to
$2q_2\pi+\alpha_2-2q_1\pi=2q\pi+\alpha_2$.

The difference of the phase angle of the factor $a_2+ib_2 $  and the
factor $K_2$ is: $2q\pi+\alpha_2$. So , the phase angle of
expression $G(s_2)\prod^{m}_{l=1}(1-\frac{s_2}{z_{l}})^{\gamma_{l}}G_{lz}(s_2)/\prod^{n}_{j=1}(1-\frac{s_2}{p_{j}})^{\beta_{j}}G_{jp}(s_2)$ is $2q\pi+\alpha_2=2q\pi+\arg{(b_2/a_2)}$. We can obtain:
the phase angle of $W(s)$ is $\varphi(\sigma,t)=\arg(\frac{G_{y}(\sigma,t)}{G_{x}(\sigma,t)})+
\sum^{m}_{l=1}(\gamma_{l}\arg(\frac{t-t_{l}}{\sigma-\sigma_{l}})-\gamma_{l}\arg(\frac{t_{l}}{\sigma_{l}})+
\arg(\frac{G_{zy}(\sigma,t)}{G_{zx}(\sigma,t)}))
-\sum^{n}_{j=1}(\beta_{j}\arg(\frac{t-t_{j}}{\sigma-\sigma_{j}})-\beta_{j}\arg(\frac{t_{j}}{\sigma_{j}})+
\arg(\frac{G_{py}(\sigma,t)}{G_{px}(\sigma,t)}))$. So,
according to the previous proof, we can obtain: $ \varphi(\sigma_2,t_2)=2q\pi+arg{(b_2/a_2)}$,  the point $s_2$ satisfies the
phase angle condition equation (1.2) . Hence, if the point $s_2$ is
on the root locus of Eq(1.1), which its gain is  $K_2$ and its
degree is $2q\pi+\alpha_2$, and satisfies Eq(1.1), then point
$s_2$ satisfies Eq(1.2).
\end{proof}

Lemma 1.5 gives the sufficient condition result of Theorem 1.7. Lemma
1.6 gives the necessary condition result of Theorem 1.7. So, sum up Lemma
1.5 and Lemma 1.6, we can obtain the following theorem.

\begin{theorem}
Let Eq(1.1) be $2q\pi+\alpha$ degree and $s=(\sigma,t)\in\Delta$ be an arbitrary
point. A necessary and sufficient condition
for that the point $s$  is on the root locus of $2q\pi+\alpha$
degree of Eq(1.1) is that point $s$ whether or not satisfies the
phase angle condition equation (1.2).
\end{theorem}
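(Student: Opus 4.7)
The plan is to obtain Theorem 1.7 as an immediate corollary of Lemmas 1.5 and 1.6, since the theorem asserts a biconditional (``necessary and sufficient condition'') and each of the two lemmas establishes one direction of this biconditional for an arbitrary point $s=(\sigma,t)\in\Delta$ with respect to the root locus of $2q\pi+\alpha$ degree of Eq(1.1).

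First I would fix the setup exactly as in Lemmas 1.5 and 1.6: take Eq(1.1) to be the root locus equation of $2q\pi+\alpha$ degree and $s=(\sigma,t)\in\Delta$ an arbitrary point, so that all phase angles in the decomposition $\varphi(\sigma,t)$ from Lemma 1.4 are well defined and Theorem 1.3 guarantees that the gain $K$ ranges over all non-negative real values. This ensures that both directions of the argument apply to the same class of points and the same notion of ``degree'' of the locus.

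For the sufficiency direction, I would simply invoke Lemma 1.5: if $s$ satisfies the phase angle condition equation (1.2) of $2q\pi+\alpha$ degree, then by Lemma 1.5 the point $s$ lies on the root locus of $2q\pi+\alpha$ degree of Eq(1.1), with the gain determined by the modulus expression for $K$ evaluated at $s$. For the necessity direction, I would invoke Lemma 1.6: if $s$ lies on the root locus of $2q\pi+\alpha$ degree of Eq(1.1), then $s$ satisfies the phase angle condition equation (1.2). Combining the two implications yields the required biconditional characterization of the root locus in terms of the phase angle condition.

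There is essentially no analytic obstacle left at this point, since the lemmas have already done the computational work of comparing the phase angles of $K$, of the rational factor, and of the right-hand side $a+ib$. The only point requiring care is bookkeeping of the integer multiples of $2\pi$ that arise when splitting the phase angle of the left-hand side of Eq(1.1) into the contribution from $K$ (which has phase angle $2q_1\pi$) and the contribution from the remaining factor; one has to verify that the particular integer $q$ appearing in Eq(1.2) is consistent on both sides. This is already implicit in the proofs of Lemmas 1.5 and 1.6, so I would just remark briefly that the same integer $q$ can be chosen, and then conclude the theorem.
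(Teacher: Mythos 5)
Your proposal matches the paper's own argument exactly: the paper derives Theorem 1.7 by simply combining Lemma 1.5 (the sufficiency direction) with Lemma 1.6 (the necessity direction), which is precisely what you do. Your additional remark about tracking the integer multiples of $2\pi$ is a reasonable point of care but is, as you note, already handled inside the proofs of the two lemmas.
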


\begin{definition}
Let $\Xi$ be a point set in $\mathbb{C}\cup\{\infty\}$, that are consist of all of points on the path of roots of Eq(1.1) traced out in $\mathbb{C}\cup\{\infty\}$
as $2q\pi+\alpha=2q\pi+\arg(\frac{b}{a})$ is a constant. That path of Eq(1.1) in $\mathbb{C}\cup\{\infty\}$ is called as \emph{the root locus of Eq(1.1)}.
Namely, the set $\Xi$ is \emph{the root locus of the $2q\pi+\alpha$ degree}.
\end{definition}

The subset of $\Xi$ is part or entire root locus except  zeros and poles.
And the empty set $\emptyset$ is the case when zero and pole are coincident, the case has no root locus.
Let $\tau$ be the collection of subsets of $\Xi$, the ordered pair $(\Xi,\tau)$ satisfying the following properties:
$\emptyset$ and $\Xi$ itself are both open in $\tau$, the intersection of any two open sets is
open in $\tau$, and the union of every collection of open sets is open in $\tau$.
So, the collection $\tau$ is a topology on $\Xi$, and the ordered pair $(\Xi,\tau)$ is a topological space.

The meromorphic function $W(s)$ is the factor at Eq(1.1)'s left side. So, the $\varphi(\sigma,t)$ function which is in Lemma 1.4 can be used to compute the degree of the root locus of Eq(1.1). The factor at the right side of Eq(1.1) is the computing value.

\begin{definition}
When the phase angle of meromorphic function $W(s)$ at the left side of the root locus equation (1.1) is $2q\pi+\alpha$ degree, we call the root locus equation (1.1) as \emph{the $2q\pi+\alpha$ degree root locus equation}.
\end{definition}

In $\mathbb{C}\cup\{\infty\}$, the degree number of the phase angle of meromorphic function $W(s)$ at the left side of the root locus equation (1.1) is the degree number of the root locus of the root locus equation (1.1). So, we can give a definition of the degree of the root locus of Eq(1.1).

\begin{definition}
When the phase angle of meromorphic function $W(s)$ at the left side of the root locus equation (1.1) is $2q\pi+\alpha$ degree, we call the root locus of the root locus equation (1.1) as \emph{$2q\pi+\alpha$ degree root locus}.
\end{definition}

\begin{lemma}
For all finite zeros of Eq(1.1), they are on the root locus of all degree numbers of Eq(1.1) from $2q\pi$ degree to $2q\pi+2\gamma_{l}\pi$ degree.
\end{lemma}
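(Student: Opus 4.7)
The plan is to invoke Theorem 1.7 together with a direction-of-approach analysis at the zero. Since the zero $z_l$ itself is excluded from $\Delta$, I would first pass to a punctured neighborhood of $z_l$ and study how the phase angle function $\varphi(\sigma,t)$ from Lemma 1.4 behaves as the point $s=(\sigma,t)$ tends to $z_l=(\sigma_l,t_l)$ along a straight ray. The gain condition is already handled by Lemma 1.2, which asserts $K=+\infty$ at any finite zero, so the whole question reduces to showing that the phase angle at $z_l$ can be assigned any value in the claimed interval.

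First, I would decompose $\varphi(\sigma,t)$ into the contribution $\gamma_l\arg\!\left(\frac{t-t_l}{\sigma-\sigma_l}\right)$ coming from the $l$-th zero factor and a remainder $\psi(\sigma,t)$ collecting all terms involving the other zeros, the poles, and the auxiliary analytic factors $G$, $G_{lz}$, $G_{jp}$. Because the other zeros and poles are distinct from $z_l$ (after the cancellation stipulated before Lemma 1.2) and the auxiliary factors are continuous and non-vanishing on a small enough neighborhood of $z_l$, the remainder $\psi(\sigma,t)$ extends continuously to $z_l$ with a finite limit $\psi_0$.

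Next, I would parametrize $s-z_l = r e^{i\theta}$ with $r>0$ small and $\theta\in[0,2\pi)$. Along the ray of argument $\theta$, the isolated term equals $\gamma_l\theta$ modulo a constant that can be absorbed into $\psi_0$, so $\varphi(\sigma,t)\to \psi_0+\gamma_l\theta$ as $r\to 0$. As $\theta$ sweeps $[0,2\pi)$, the value $\gamma_l\theta$ sweeps the entire interval $[0,2\gamma_l\pi)$, which, after shifting by an integer multiple of $2\pi$ to align with the base angle, covers every degree from $2q\pi$ to $2q\pi+2\gamma_l\pi$ for a suitably chosen $q$. Then, for each target degree $2q\pi+\alpha$ in this range, I select the ray direction giving that limiting phase and use continuity of $\varphi$ together with the intermediate value property to produce a sequence of points $s_k\in\Delta$ converging to $z_l$ with $\varphi(s_k)=2q\pi+\alpha$; by Theorem 1.7 each $s_k$ lies on the $2q\pi+\alpha$ degree root locus, and since $K(s_k)\to +\infty$ as $s_k\to z_l$ in accord with Lemma 1.2, the limit point $z_l$ itself is attached to that branch of the root locus.

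The main obstacle I anticipate is the bookkeeping of argument branches near $z_l$: one must verify that varying the direction of approach genuinely unwraps the full interval of length $2\gamma_l\pi$ without collapsing modulo $2\pi$, that the correction constants folded into $\psi_0$ are chosen consistently across rays, and that the topological identification of $z_l$ as a limit point of each branch legitimately places $z_l$ on the root locus under Definition 1.8 rather than merely on its closure.
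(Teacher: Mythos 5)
Your proposal is essentially sound as a plan, but it takes a genuinely different route from the paper. The paper's proof is purely pointwise and algebraic: it substitutes the finite zero $zp$ into $W(s)$, obtains $W(zp)=0$, writes $0=K_{zp}e^{i\theta_{zp}}=0\cdot(\cos\theta_{zp}+i\sin\theta_{zp})$, and argues that because the modulus is $0$ the equation holds for an arbitrary phase angle $\theta_{zp}$, so the zero lies on the root locus of every degree in the stated range. Your argument is instead a local asymptotic analysis: you isolate the term $\gamma_l\arg\bigl(\frac{t-t_l}{\sigma-\sigma_l}\bigr)$, show the remainder of $\varphi$ extends continuously to $z_l$, and sweep the direction of approach to realize every limiting phase in an interval of length $2\gamma_l\pi$ --- the classical angle-of-arrival computation. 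Your route buys something the paper's does not: it actually \emph{derives} why the range of degrees has length exactly $2\gamma_l\pi$ (multiplicity $\gamma_l$ times the $2\pi$ sweep of incoming directions), whereas the paper's ``argument of zero is arbitrary'' reasoning would assign \emph{any} phase whatsoever and the restriction to $[2q\pi,\,2q\pi+2\gamma_l\pi]$ is asserted rather than proved. It also connects directly to Lemma 1.13's claim that root loci are received at zeros. The cost is the issue you yourself flag at the end: your construction exhibits $z_l$ as a limit point of each branch, and promoting that to membership in the root locus under Definition 1.8 requires either a closure convention or precisely the degenerate-modulus convention the paper adopts; the paper sidesteps your branch-bookkeeping entirely by working at the zero itself rather than in a punctured neighborhood. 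Neither resolution is fully rigorous, but your version is the more informative of the two.
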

\begin{proof}
Assuming that the point $zp$ is an arbitrary finite zeros of Eq(1.1). When the point $zp$ is substituted into meromorphic function $W(s)$ at the left side of Eq(1.1),
we have: $G(zp)\frac{\prod^{m}_{l=1}(1-\frac{zp}{z_{l}})^{\gamma_{l}}G_{lz}(zp)}
{\prod^{n}_{j=1}(1-\frac{zp}{p_{j}})^{\beta_{j}}G_{jp}(zp)}=0$.
This equation can also be expressed as: $G(zp)\frac{\prod^{m}_{l=1}(1-\frac{zp}{z_{l}})^{\gamma_{l}}G_{lz}(zp)}
{\prod^{n}_{j=1}(1-\frac{zp}{p_{j}})^{\beta_{j}}G_{jp}(zp)}=K_{zp}e^{i\theta_{zp}}=0$.
In this equation, $K_{zp}$ is the modulus of the function $W(s)$ and $\theta_{zp}$ is the phase angle of the function $W(s)$.
So, $K_{zp}=0$, and $\theta_{zp}=2q\pi+\alpha$, $K_{zp}e^{i(2q\pi+\alpha)}=0*(cos(\theta_{zp})+isin(\theta_{zp}))=0$ is true,
and $\theta_{zp}$ is an arbitrary degree number from $2q\pi$ degree to $2q\pi+2\gamma_{l}\pi$ degree.

For the point $zp$ that lets the left side of Eq(1.1) obtain 0, no matter what phase angle it is, since its modulus is 0. $cos(\theta_{zp})+isin(\theta_{zp})$ is the non-zero and non-infinity, the modulus of the factor $cos(\theta_{zp})+isin(\theta_{zp})$ is 1.
 $\theta_{zp}$ represents an arbitrary degree number, which shows: no matter what value $\theta_{zp}$ is,
there is $K_{zp}e^{i(2q\pi+\alpha)}=0*(\cos(\theta_{zp})+i\sin(\theta_{zp}))=0$.

This proves: For the finite zero $zp$ of Eq(1.1), when it is substituted into the meromorphic function $W(s)$, for the phase angle of the arbitrary $2q\pi+\alpha$ degree number of the meromorphic function $W(s)$, its values are all equal to 0. According to Definition 1.10, the phase angle of the meromorphic function $W(s)$ is namely the phase angle of the root locus of Eq(1.1). Because $2q\pi+\alpha$ is an arbitrary degree number, when it obtains all of degree numbers, this shows the finite zeros of Eq(1.1) are simultaneously on the root locus of all of degree numbers of Eq(1.1) from $2q\pi$ degree to $2q\pi+2\gamma_{l}\pi$ degree.
\end{proof}

\begin{lemma}
For all finite poles of Eq(1.1), they are on the root locus of all degree numbers of Eq(1.1) from $2q\pi$ degree to $2q\pi+2\beta_{j}\pi$ degree.
\end{lemma}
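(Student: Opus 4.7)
The plan is to mirror the proof of Lemma 1.11 with zeros replaced by poles, exploiting the symmetry between zero and pole factors in Eq(1.1). First, I would pick an arbitrary finite pole $pp$ of Eq(1.1) and substitute it into the meromorphic function $W(s)$ on the left side of the equation. The pole factor $\prod^{n}_{j=1}(1-\frac{s}{p_{j}})^{\beta_{j}}G_{jp}(s)$ vanishes at $pp$, so the modulus $K_{pp}^{\ast}$ of $W(s)$ at $pp$ equals $+\infty$; equivalently, by Lemma 1.2 the gain satisfies $K_{pp}=1/K_{pp}^{\ast}=0$.

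Next, I would write $W(pp)=K_{pp}^{\ast}e^{i\theta_{pp}}$ and observe that the root locus equation at $pp$ takes the form $K_{pp}\cdot W(pp)=0\cdot K_{pp}^{\ast}\cdot e^{i\theta_{pp}}=a+ib$. Just as in Lemma 1.11 the relation $0\cdot(\cos\theta_{zp}+i\sin\theta_{zp})=0$ holds for every phase, here the product is to be interpreted as the limit along the root locus, and the condition on the unit complex number $a+ib$ imposes no restriction on $\theta_{pp}$. Because $pp$ is a pole of multiplicity $\beta_{j}$, the argument of the local factor $(1-\tfrac{s}{p_{j}})^{\beta_{j}}$ sweeps through a full range of $2\beta_{j}\pi$ as $s$ encircles $pp$, so $\theta_{pp}$ runs over an arbitrary degree number from $2q\pi$ to $2q\pi+2\beta_{j}\pi$.

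Finally, appealing to Definition 1.10, which identifies the phase angle of $W(s)$ with the degree of the root locus of Eq(1.1), I would conclude that for every $\alpha\in[0,2\beta_{j}\pi]$ the pole $pp$ satisfies the root locus equation at degree $2q\pi+\alpha$, so $pp$ lies simultaneously on all such root loci, which is the statement of the lemma.

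The main obstacle will be justifying the $0\cdot\infty$ indeterminate form at the pole with the same informal rigor as the $0$-times-a-unit-phasor argument used in Lemma 1.11. I expect to handle this exactly as the author does in the zero case, by separating the magnitude equation (automatically satisfied since $K_{pp}=0$ cancels the infinite modulus) from the phase equation (imposing no constraint on $\theta_{pp}$), so that every degree in the stated interval is realized and the pole belongs to every corresponding root locus.
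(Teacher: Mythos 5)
Your proposal follows essentially the same route as the paper's own proof: substitute the finite pole into $W(s)$, observe that the modulus becomes $+\infty$ while the phase angle $\theta$ is left unconstrained (since $(+\infty)(\cos\theta+i\sin\theta)=\infty$ for every $\theta$), and invoke Definition 1.10 to place the pole on the root locus of every degree from $2q\pi$ to $2q\pi+2\beta_{j}\pi$. Your added winding observation---that the argument of $(1-\frac{s}{p_{j}})^{\beta_{j}}$ sweeps through $2\beta_{j}\pi$ as $s$ encircles the pole---is a small supplement justifying the stated range, which the paper merely asserts, but it does not change the structure of the argument.
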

\begin{proof}
Assuming that the point $pz$ is an arbitrary finite pole of Eq(1.1). When the point $pz$ is substituted into the meromorphic function $W(s)$, we can obtain a value $Gpz=G(pz)\frac{\prod^{m}_{l=1}(1-\frac{pz}{z_{l}})^{\gamma_{l}}G_{lz}(pz)}
{\prod^{n}_{j=1}(1-\frac{pz}{p_{j}})^{\beta_{j}}G_{jp}(pz)}=\infty$.

In complex analysis, 0 can be written as: $0*e^{i\theta}=0*(\cos\theta+i\sin\theta)$.
The points of non-zero and non-infinity finite values can also be written as: $k*e^{i\theta}=k*(\cos\theta+i\sin\theta)$.
The infinity can also be written as: $(+\infty)*e^{i\theta}=(+\infty)*(\cos\theta+i\sin\theta)$. In which, $(\cos\theta+i\sin\theta)$ is a non-zero and non-infinity, and its modulus is 1 of the unit complex number value.
For $(+\infty)*e^{i\theta}=(+\infty)*(\cos\theta+i\sin\theta)$, no matter what value $\theta=2q\pi+\alpha$ obtains on the unit circle in $\mathbb{C}\cup\{\infty\}$, the values of this expression all obtain the infinity.

The above equation can also be expressed as: $Gpz=G(pz)\frac{\prod^{m}_{l=1}(1-\frac{pz}{z_{l}})^{\gamma_{l}}G_{lz}(pz)}
{\prod^{n}_{j=1}(1-\frac{pz}{p_{j}})^{\beta_{j}}G_{jp}(pz)}=K_{pz}e^{i\theta_{pz}}=\infty$.
In this equation, $K_{pz}$ is the modulus of the meromorphic function $W(s)$ and $\theta_{pz}$ is the phase angle of the meromorphic function $W(s)$.
So, $K_{pz}=+\infty$, and $\theta_{pz}=2q\pi+\alpha$, $K_{pz}*e^{i\theta}=(+\infty)*(\cos\theta+i\sin\theta)=\infty$ is true,
and $\theta_{pz}=2q\pi+\alpha$ is an arbitrary degree number from $2q\pi$ degree to $2q\pi+2\beta_{j}\pi$ degree.

For the finite pole $pz$ of Eq(1.1), when it is substituted into the meromorphic function $W(s)$, for the phase angle of the arbitrary $2q\pi+\alpha$ degree number of the meromorphic function $W(s)$, its values are all equal to infinty. According to Definition 1.10, the phase angle of the meromorphic function $W(s)$ is namely the phase angle of the root locus of Eq(1.1).
Because $2q\pi+\alpha$ is an arbitrary degree number from $2q\pi$ degree to $2q\pi+2\beta_{j}\pi$ degree, when it obtains all of degree numbers, this shows the finite poles of Eq(1.1) are simultaneously on the root locus of all of degree numbers of Eq(1.1) from $2q\pi$ degree to $2q\pi+2\beta_{j}\pi$ degree.
\end{proof}

\begin{lemma}
All of the root locus of arbitrary $2q\pi+\alpha$ degree number of Eq(1.1) are originated from poles of Eq(1.1),
and are finally received at zeros of Eq(1.1).
\end{lemma}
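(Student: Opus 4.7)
The plan is to fix an arbitrary degree $2q\pi+\alpha$ and use the gain $K$ as a continuous parameter that traces out the root locus from pole to zero. The whole argument will hinge on three facts already available: Lemma 1.2 (at finite poles $K=0$ and at finite zeros $K=+\infty$), Theorem 1.3 (as $s$ ranges over $\mathbb{C}\cup\{\infty\}$ the gain $K$ attains every non-negative real value including $+\infty$), and the continuity of $K$ as a function of $s$ on the extended complex plane that is noted just before Theorem 1.3.

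First I would fix the degree $2q\pi+\alpha$ and consider the root locus $\Xi$ of that degree, as introduced in Definition 1.8. By Theorem 1.7, $\Xi$ is exactly the set of points satisfying both the phase-angle equation (1.2) with that degree and the gain equation (1.1). Along $\Xi$ the phase angle is pinned, so the only free parameter along the locus is the gain $K$. I would then regard $K$ as a continuous real-valued function on $\Xi$: by Theorem 1.3 its image is the full interval $[0,+\infty]$, and by Lemma 1.2 the preimage of $K=0$ consists solely of finite poles of Eq(1.1) while the preimage of $K=+\infty$ consists solely of finite zeros (together with the appropriate behavior at $\infty$).

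Next I would argue that any connected branch of $\Xi$ must, by continuity of $K$ and the intermediate-value argument, be parametrized by a continuous variation of $K$ starting at the smallest $K$-value attained on that branch and ending at the largest. Since every value in $[0,+\infty]$ is attained on $\Xi$, the endpoints of the $K$-parametrization of each branch are forced to lie in the level sets $\{K=0\}$ and $\{K=+\infty\}$, which are precisely the poles and the zeros of Eq(1.1). Lemmas 1.11 and 1.12 provide the companion fact that these poles and zeros actually sit on the root locus of every degree $2q\pi+\alpha$ in the allowed range, so the candidate endpoints are legitimately on $\Xi$ and can serve as starting and terminal points.

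The main obstacle I anticipate is the rigorous handling of branching and of the point at infinity: a priori $\Xi$ could break into several connected components or could contain self-intersections where several $K$-values coincide, and near $s=\infty$ the continuity statement needs to be interpreted in the spherical metric. To address this I would appeal to the topological space $(\Xi,\tau)$ introduced after Definition 1.8 and treat $K\colon \Xi\to[0,+\infty]$ as a proper continuous map, so that each connected component is compact and its image must therefore be a closed interval containing both $0$ and $+\infty$; together with Lemma 1.2 this forces every component to start at a pole and terminate at a zero, which is exactly the assertion of Lemma 1.13.
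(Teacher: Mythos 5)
Your route is genuinely different from the paper's. The paper proves Lemma 1.13 by an exhaustive classification of the points of $\mathbb{C}\cup\{\infty\}$ into four types (poles, zeros, general finite points on a locus, points at infinity), arguing that a general point --- which carries the phase angle of only one degree and a finite non-zero gain --- is an ordinary interior point of a single locus and so cannot originate or receive anything, whereas the finite and infinite zeros and poles lie on the loci of \emph{all} degrees simultaneously (Lemmas 1.11 and 1.12) and carry $K=+\infty$ and $K=0$ respectively, making them the only admissible endpoints. You instead run a global intermediate-value argument on the restriction of $K$ to the fixed-degree locus $\Xi$; this is a legitimate alternative strategy (it is close to the mechanism the paper itself uses later, in Lemma 3.1 and Theorem 3.2), but as written it has a gap at its central step.

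The gap is the claim that each connected component of $\Xi$ has $K$-image equal to all of $[0,+\infty]$. Neither fact you invoke delivers this. Theorem 1.3 asserts that $K$ attains every value in $[0,+\infty]$ as $s$ ranges over the whole of $\mathbb{C}\cup\{\infty\}$, not over the locus of one fixed degree, and certainly not over a single component of it; surjectivity on the union says nothing about any one branch. Compactness or properness of $K$ on a component only forces the image to be a compact interval $[m,M]\subseteq[0,+\infty]$; it does not force $m=0$ and $M=+\infty$, so "its image must therefore be a closed interval containing both $0$ and $+\infty$" is a non sequitur. A priori a component could be a closed loop on which $K$ ranges over $[1,2]$, or an arc joining two zeros, with image $[m,+\infty]$ for some $m>0$. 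What is actually needed is a local statement: at a point that is neither a zero, a pole, nor a zero of $W'$, the two partial derivatives of $\varphi$ do not vanish simultaneously (Theorem 2.8), so by the implicit function theorem (Lemma 2.14) the locus is locally a regular arc through that point and $K$ is locally strictly monotone along it; hence no branch can terminate, and no interior extremum of $K$ can occur, at such a point. That is the ingredient that forces the endpoints of a branch into the level sets $\{K=0\}$ and $\{K=+\infty\}$, i.e., onto the poles and zeros, and it is precisely the step your proposal assumes rather than proves. (The paper's own proof also asserts rather than derives this exclusion, but it at least states it explicitly as the claim that a general point "can not be the originating point and receiving point.")
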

\begin{proof}
The arbitrary $2q\pi+\alpha$ degree number root locus of Eq(1.1) are the curves in $\mathbb{C}\cup\{\infty\}$. So, they all need to have their own origination points and receiving points. According to the relationship between the points in $\mathbb{C}\cup\{\infty\}$ and the root locus of Eq(1.1), the points in $\mathbb{C}\cup\{\infty\}$ can be divided into four types, one is the poles of Eq(1.1), one is the zeros of Eq(1.1), the other is the general finite points on the $2q\pi+\alpha$ degree root locus of Eq(1.1), and the last type is the infinity points in $\mathbb{C}\cup\{\infty\}$.

Except the finite and infinite zeros of Eq(1.1), and except the finite and infinite poles of Eq(1.1), a general finite point in $\mathbb{C}\cup\{\infty\}$ is on a root loci of a certain degree number of Eq(1.1). And a general finite point in $\mathbb{C}\cup\{\infty\}$ only has the phase angles of only one degree number and non-zero finite gain value. It can not be the originating point and receiving point of the root locus of Eq(1.1).

When the infinite point in $\mathbb{C}\cup\{\infty\}$ let the meromorphic function $W(s)$  obtain a constant $A$.
The gain values of the infinite point in $\mathbb{C}\cup\{\infty\}$ are the constant $\abs{A}$.
The phase angles of the meromorphic function $W(s)$  are all equal to $2q\pi+arg{(A)}$ degree.
So, these infinite points in $\mathbb{C}\cup\{\infty\}$ are also the ordinary points of the finite values with the gain $\abs{A}$ on the $2q\pi+\arg(a)$ degree root locus. They can not originate or receive the root locus.

When the infinite points in $\mathbb{C}\cup\{\infty\}$ are the infinite zeros or poles of Eq(1.1). On the infinite zero or pole, the gain value is $K=+\infty$ or $K=0$ respectively.

The finite poles and zeros of Eq(1.1) are on the root locus of all of degree numbers of Eq(1.1), and the infinite number of the different degree numbers root locus are at the same one point, so, they satisfy the condition that the root locus can be originated or received. On the finite and infinite poles of Eq(1.1), there is $K=0$. On the finite and infinite zeros of Eq(1.1), there is $K=+\infty$. Thus, we can let the finite and infinite poles of Eq(1.1) as the origination points of the root locus. The finite and infinite zero points of Eq(1.1) are the receiving points of the root locus.
\end{proof}
\begin{definition}
\emph{The origination points} of the root locus of the arbitrary $2q\pi+\alpha$ degree of Eq(1.1) are the points on the $2q\pi+\alpha$ degree root locus that their corresponding gain values equal to zero in $\mathbb{C}\cup\{\infty\}$, $K=0$.
\end{definition}

\begin{definition}
\emph{The receiving points} of the root locus of the arbitrary $2q\pi+\alpha$ degree of Eq(1.1) are the points on the $2q\pi+\alpha$ degree root locus that their corresponding gain values equal to infinity in $\mathbb{C}\cup\{\infty\}$, $K=+\infty$.
\end{definition}
\begin{lemma}
(1). For any infinite point of $\mathbb{C}\cup\{\infty\}$, if it satisfies

$K=\lim\limits_{\abs{s}\to +\infty}\frac{\prod^{n}_{j=1}|1-\frac{s}{p_{j}}|^{\beta_{j}}|G_{jp}(s)|}{{|G(s)|\prod^{m}_{l=1}|1-\frac{s}{z_{l}}|^{\gamma_{l}}|G_{lz}(s)|}}=0$, then it is the infinite pole of the root locus of Eq(1.1). These infinite points must be the origination points of  $2q\pi+\alpha$ degrees root locus of Eq(1.1) in $\mathbb{C}\cup\{\infty\}$.

(2). For any infinite point of $\mathbb{C}\cup\{\infty\}$, if it satisfies

$K=\lim\limits_{\abs{s}\to +\infty}\frac{\prod^{n}_{j=1}|1-\frac{s}{p_{j}}|^{\beta_{j}}|G_{jp}(s)|}{{|G(s)|\prod^{m}_{l=1}|1-\frac{s}{z_{l}}|^{\gamma_{l}}|G_{lz}(s)|}}=A$, in which $A\neq 0$ is a finite value, then it is a general point that its gain is a finite value. These infinite points are the general points on the root locus of Eq(1.1) in $\mathbb{C}\cup\{\infty\}$.

(3). For any infinite point of $\mathbb{C}\cup\{\infty\}$, if it satisfies

$K=\lim\limits_{\abs{s}\to +\infty}\frac{\prod^{n}_{j=1}|1-\frac{s}{p_{j}}|^{\beta_{j}}|G_{jp}(s)|}{{|G(s)|\prod^{m}_{l=1}|1-\frac{s}{z_{l}}|^{\gamma_{l}}|G_{lz}(s)|}}=+\infty$, then the infinite point is the infinite zero of the root locus of Eq(1.1). These infinite points must be the receiving points of  $2q\pi+\alpha$ degrees root locus of Eq(1.1) in $\mathbb{C}\cup\{\infty\}$.

\end{lemma}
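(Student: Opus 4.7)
The plan is to reduce the three parts to a direct application of Lemma 1.2 (which identifies $K=0$ with poles and $K=+\infty$ with zeros of Eq(1.1)), extended to the point at infinity by the continuity of the gain $K$ on $\mathbb{C}\cup\{\infty\}$ noted just before Theorem 1.3, and then to invoke Lemma 1.13 together with Definitions 1.13 and 1.14 to conclude the role of these infinite points on the root locus.

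First I would set up the extension to infinity. Writing
\[
K(s)=\Bigl|\tfrac{\prod_{j=1}^{n}(1-\tfrac{s}{p_{j}})^{\beta_{j}}G_{jp}(s)}{G(s)\prod_{l=1}^{m}(1-\tfrac{s}{z_{l}})^{\gamma_{l}}G_{lz}(s)}\Bigr|,
\]
I would remark that by the continuity statement preceding Theorem 1.3, the value of $K$ at the point $\infty$ is well-defined via $\lim_{|s|\to+\infty}K(s)$ whenever this limit exists in $[0,+\infty]$. This turns each of the three hypotheses of the lemma into a statement about the value of $K$ at the infinite point under consideration, after which Lemma 1.2 can be applied verbatim after passing to the one-point compactification.

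Next I would treat the three cases in turn. For part (1), the hypothesis $K=0$ at the infinite point places this point in the class characterized by Lemma 1.2 as a pole of the (extended) root locus equation, so it is an infinite pole of Eq(1.1); by Lemma 1.13 all root loci originate at poles, and by Definition 1.13 origination points are precisely those with $K=0$, so the infinite point is an origination point of the $2q\pi+\alpha$ degree root locus for every $\alpha$, in complete analogy with Lemma 1.12. For part (2), the hypothesis $K=A$ with $A\neq 0,\infty$ means the infinite point carries a finite nonzero gain; by Theorem 1.7 the phase-angle condition singles out exactly one degree $2q\pi+\alpha$ on which such a point lies, so it is a general (ordinary) point of the root locus with finite gain, not an origination or receiving point. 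For part (3), the hypothesis $K=+\infty$ places the infinite point into the class characterized by Lemma 1.2 as a zero, so it is an infinite zero of Eq(1.1); Lemma 1.13 guarantees that all root loci terminate at zeros, and Definition 1.14 identifies receiving points with those of gain $+\infty$, whence the infinite point is a receiving point for every $2q\pi+\alpha$ degree root locus, paralleling Lemma 1.11.

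The main obstacle, as I see it, is purely formal rather than technical: one must justify that Lemma 1.2, which was proved for finite zeros and poles after cancellation, genuinely extends to $\infty\in\mathbb{C}\cup\{\infty\}$. I would handle this by the standard substitution $s=1/w$, so that $K$ becomes a rational-type expression in $w$ near $w=0$ and the hypotheses of the three parts translate into statements that $w=0$ is, respectively, a pole, an ordinary point, or a zero of the transformed equation; then Lemma 1.2 applies directly at $w=0$, and transferring back via the homeomorphism $w\mapsto 1/w$ of $\mathbb{C}\cup\{\infty\}$ yields the desired classification at $s=\infty$. Everything else is bookkeeping against Definitions 1.13 and 1.14.
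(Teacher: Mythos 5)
Your proposal is correct and follows essentially the same route as the paper: the paper's own proof simply reads off each case from Definitions 1.14 and 1.15 (origination points are by definition those with $K=0$, receiving points those with $K=+\infty$), which is exactly the content of your second paragraph. The additional scaffolding you supply --- extending Lemma 1.2 to $s=\infty$ via the substitution $s=1/w$ --- does not appear in the paper, which takes the classification of infinite points as poles/zeros for granted; also note that the definitions you cite as 1.13 and 1.14 are numbered 1.14 and 1.15 in the paper.
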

\begin{proof}
(i). For any infinite points of $\mathbb{C}\cup\{\infty\}$, if they satisfy

$K=\lim\limits_{\abs{s}\to +\infty}\frac{\prod^{n}_{j=1}|1-\frac{s}{p_{j}}|^{\beta_{j}}|G_{jp}(s)|}{{|G(s)|\prod^{m}_{l=1}|1-\frac{s}{z_{l}}|^{\gamma_{l}}|G_{lz}(s)|}}=0$, according to the definition of the origination points of the arbitrary degree root locus of Eq(1.1), the infinity points on the root locus of the arbitrary $2q\pi+\alpha$ degree of Eq(1.1) are also origination points of the arbitrary $2q\pi+\alpha$ degree root locus of Eq(1.1).

(ii). For any infinite points of $\mathbb{C}\cup\{\infty\}$, if they satisfy

$K=\lim\limits_{\abs{s}\to +\infty}\frac{\prod^{n}_{j=1}|1-\frac{s}{p_{j}}|^{\beta_{j}}|G_{jp}(s)|}{{|G(s)|\prod^{m}_{l=1}|1-\frac{s}{z_{l}}|^{\gamma_{l}}|G_{lz}(s)|}}=A$, in which $A\neq 0$ is a finite value. So, all of the infinity points in $\mathbb{C}\cup\{\infty\}$ that their gains are the finite values are all general points. The infinity points in $\mathbb{C}\cup\{\infty\}$ are the general points that their gains are the finite values $A$.

(iii). For any infinite points of $\mathbb{C}\cup\{\infty\}$, if they satisfy

$K=\lim\limits_{\abs{s}\to +\infty}\frac{\prod^{n}_{j=1}|1-\frac{s}{p_{j}}|^{\beta_{j}}|G_{jp}(s)|}{{|G(s)|\prod^{m}_{l=1}|1-\frac{s}{z_{l}}|^{\gamma_{l}}|G_{lz}(s)|}}=+\infty$. According to the definition of the receiving points of the arbitrary degree root locus of Eq(1.1),  the infinity points on the root locus of the arbitrary $2q\pi+\alpha$ degree of Eq(1.1) are also receiving points of the arbitrary $2q\pi+\alpha$ degree root locus of Eq(1.1).
\end{proof}
Sum up Lemma 1.13 and Lemma 1.16, we can obtain Theorem 1.17.
\begin{theorem}
In $\mathbb{C}\cup\{\infty\}$, all root locus of the arbitrary $2q\pi+\alpha$ degree number of Eq(1.1) are originated from their finite poles or the infinite poles. And these root locus are received by their finite zeros or the infinite zeros.
\end{theorem}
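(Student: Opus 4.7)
The plan is to combine Lemma 1.13 and Lemma 1.16 into a single dichotomy covering both finite and infinite endpoints of any $2q\pi+\alpha$ degree root locus curve. Lemma 1.13 has already established that along any such curve, an origination point must carry gain $K=0$ and a receiving point must carry $K=+\infty$, because any general finite point lies on a single degree of root locus with finite positive gain and so cannot play either role. The theorem then asks us to identify which concrete points in $\mathbb{C}\cup\{\infty\}$ can actually serve as those endpoints, including the point at infinity.

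First I would handle the finite case. By Lemma 1.2, the finite points satisfying $K=0$ are exactly the finite poles of Eq(1.1), and the finite points satisfying $K=+\infty$ are exactly the finite zeros. Together with Lemma 1.13 this gives the claim whenever the origination or reception happens at a finite point: origination from finite poles, reception at finite zeros.

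Next I would handle the infinite case by invoking Lemma 1.16. Its three items classify the point at $\infty$ according to the limit of the gain: case (1), with limit $0$, identifies it as an infinite pole and an origination point of the $2q\pi+\alpha$ degree root locus; case (3), with limit $+\infty$, identifies it as an infinite zero and a receiving point; case (2), with a finite nonzero limit $A$, makes it merely a general point lying on the single degree $2q\pi+\arg(A)$ locus with finite gain, and so by the same reasoning as in Lemma 1.13 it cannot originate or receive. Hence any infinite endpoint of a root locus curve is either an infinite pole (originating) or an infinite zero (receiving).

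The main obstacle is simply the bookkeeping of the case distinction, in particular the explicit exclusion of Lemma 1.16(2) when the point at infinity carries a finite nonzero limit gain, so that at infinity only cases (1) and (3) can contribute endpoints. Once that is in place, the finite conclusion from Lemma 1.13 and Lemma 1.2 together with the infinite conclusion from Lemma 1.16(1) and (1.16)(3) assemble into exactly the statement of Theorem 1.17: every $2q\pi+\alpha$ degree root locus originates at a finite or infinite pole of Eq(1.1) and is received at a finite or infinite zero of Eq(1.1).
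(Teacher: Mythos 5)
Your proposal is correct and follows essentially the same route as the paper, which obtains Theorem 1.17 simply by summing up Lemma 1.13 (finite poles originate, finite zeros receive, general points cannot do either) and Lemma 1.16 (the trichotomy at infinity). Your added citation of Lemma 1.2 for the finite case and your explicit exclusion of case (2) of Lemma 1.16 only make explicit what the paper leaves implicit.
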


\section{Some properties of root locus of $W(s)$}
In this section, we show the expressions of the argument\cite{Stein} of  meromorphic function $W(s)$  in $\mathbb{C}\cup\{\infty\}$.

(1). When $u(\sigma,t)>0$, $v(\sigma,t)\ge0$. $\varphi(\sigma,t)=arg(\frac{v(\sigma,t)}{u(\sigma,t)})=\arctan(\frac{v(\sigma,t)}{u(\sigma,t)})$.

(2). When $u(\sigma,t)<0$, $v(\sigma,t)\ge0$. $\varphi(\sigma,t)=arg(\frac{v(\sigma,t)}{u(\sigma,t)})=\pi-\arctan(\frac{v(\sigma,t)}{-u(\sigma,t)})$.

(3). When $u(\sigma,t)<0$, $v(\sigma,t)<0$. $\varphi(\sigma,t)=arg(\frac{v(\sigma,t)}{u(\sigma,t)})=\pi+\arctan(\frac{v(\sigma,t)}{u(\sigma,t)})$.

(4). When $u(\sigma,t)>0$, $v(\sigma,t)<0$. $\varphi(\sigma,t)=arg(\frac{v(\sigma,t)}{u(\sigma,t)})=-\arctan(\frac{-v(\sigma,t)}{u(\sigma,t)})$.

We need to create a sub-set $\Delta_{u}$ of set $\Delta$. Let $\Delta_{u}=\{s=(\sigma+it)\in\mathbb{C}\cup\{\infty\}$, $
s$ is not zero or pole of $W(s)$, and there is $u(\sigma,t)\neq0\}$.

\begin{lemma}
Let $s=(\sigma,t)\in\Delta_u$, then we have:
$\frac{\partial{\varphi}}{\partial{\sigma}}(\sigma,t)=\frac{v^{'}_{\sigma}(\sigma,t)u(\sigma,t)-u^{'}_{\sigma}(\sigma,t)v(\sigma,t)}{u^2(\sigma,t)+v^2(\sigma,t)}$,
$\frac{\partial{\varphi}}{\partial{t}}(\sigma,t)=\frac{v^{'}_{t}(\sigma,t)u(\sigma,t)-u^{'}_{t}(\sigma,t)v(\sigma,t)}{u^2(\sigma,t)+v^2(\sigma,t)}$.
\end{lemma}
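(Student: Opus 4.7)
The plan is to compute the two partial derivatives by direct application of the chain rule to the four piecewise formulas for $\varphi(\sigma,t)$ listed at the start of Section 2, and then to verify that all four cases yield the same expression, so that the stated identities hold throughout $\Delta_u$.

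First, I would note that on $\Delta_u$ we have $u(\sigma,t)\neq 0$, so every point of $\Delta_u$ falls into exactly one of the four sign regimes for $(u,v)$ (with the boundary $v=0$ belonging to two of them), and each of the four expressions for $\varphi$ is smooth on the corresponding open region because $u^2+v^2>0$ there. I would also briefly remark that the four piecewise formulas agree on the boundary loci $\{v=0, u>0\}$ and $\{v=0, u<0\}$, so $\varphi$ is $C^1$ on all of $\Delta_u$ and it suffices to do the computation region by region.

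Next I would carry out the chain rule computation in each case. In Case (1), where $\varphi = \arctan(v/u)$, one has
\[
\frac{\partial\varphi}{\partial\sigma} \;=\; \frac{1}{1+(v/u)^2}\cdot\frac{v'_\sigma u - u'_\sigma v}{u^2} \;=\; \frac{v'_\sigma u - u'_\sigma v}{u^2+v^2},
\]
which is the target formula. In Case (2) the constant $\pi$ contributes nothing, and the outer minus sign is cancelled by the minus sign appearing in $\arctan(v/(-u))$ after chain rule, giving the same expression. Case (3) differs from Case (1) only by the additive constant $\pi$, which differentiates to $0$, so the formula is unchanged. Case (4) produces two sign flips (one outside the $\arctan$, one inside the argument $-v/u$) which again cancel. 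Hence in every region the identity for $\partial\varphi/\partial\sigma$ holds.

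The computation for $\partial\varphi/\partial t$ is identical with $t$ in place of $\sigma$ throughout, since the four case definitions of $\varphi$ treat $\sigma$ and $t$ symmetrically through $u$ and $v$. I do not expect any real obstacle here: this is a routine chain-rule verification, and the only mild care needed is bookkeeping of the two sign flips in Case (2) and Case (4) to confirm that the formula is uniform across the four regions.
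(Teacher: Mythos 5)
Your proposal is correct and follows exactly the route the paper intends: the paper's entire ``proof'' is the one-line remark that the lemma ``can be easily obtained by partial derivative calculation,'' and you have simply carried out that chain-rule computation case by case, including the (correct) observation that the sign flips in Cases (2) and (4) cancel and that the only genuine branch discontinuity lies on $\{u=0\}$, which is excluded from $\Delta_u$. Nothing further is needed.
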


This lemma can be easily obtained by partial derivative calculation.

Let $W_3(s)=iW(s)$, so, $W_3(s)=iu(\sigma,t)-v(\sigma,t)$. The phase angle of function $W_3(s)$ is $\varphi_3(\sigma,t)=\frac{\pi}{2}+\varphi(\sigma,t)$.
$\frac{\partial{\varphi_3}}{\partial{\sigma}}(\sigma,t)=\frac{\partial{\varphi}}{\partial{\sigma}}(\sigma,t)$,
$\frac{\partial{\varphi_3}}{\partial{t}}(\sigma,t)=\frac{\partial{\varphi}}{\partial{t}}(\sigma,t)$. We need to create another sub-set $\Delta_{v}$ of set $\Delta$. $\Delta_{v}=\{s=(\sigma+it)\in\mathbb{C}\cup\{\infty\}$, $
s$ is not zero or pole of $W(s)$, and there is $v(\sigma,t)\neq0\}$. According to Lemma 2.1 and the result which we give in this segment, we can obtain Lemma 2.2.

\begin{lemma}
Let $s=(\sigma,t)\in\Delta_v$, then we have:
$\frac{\partial{\varphi_3}}{\partial{\sigma}}(\sigma,t)=
\frac{v^{'}_{\sigma}(\sigma,t)u(\sigma,t)-u^{'}_{\sigma}(\sigma,t)v(\sigma,t)}{u^2(\sigma,t)+v^2(\sigma,t)}$,
$\frac{\partial{\varphi_3}}{\partial{t}}(\sigma,t)=
\frac{v^{'}_{t}(\sigma,t)u(\sigma,t)-u^{'}_{t}(\sigma,t)v(\sigma,t)}{u^2(\sigma,t)+v^2(\sigma,t)}$
\end{lemma}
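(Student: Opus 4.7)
The plan is to reduce the statement to Lemma 2.1 applied to the auxiliary meromorphic function $W_3(s) = iW(s)$. Multiplication by $i$ rotates the complex plane by $\pi/2$, so $W_3$ has real part $-v(\sigma,t)$, imaginary part $u(\sigma,t)$, and phase angle $\varphi_3 = \pi/2 + \varphi$, as already recorded in the paragraph preceding the statement. Consequently the partial derivatives of $\varphi_3$ and of $\varphi$ agree wherever both are defined, and the content of the lemma is really about enlarging the domain on which the quotient formula for these derivatives is valid: Lemma 2.1 gave the formula on $\Delta_u$ (where $u\neq 0$), and Lemma 2.2 extends it, via $\varphi_3$, to $\Delta_v$ (where $v\neq 0$).

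First I would observe that $\Delta_v$ is precisely the analogue of $\Delta_u$ for $W_3$: a point lies in $\Delta_v$ iff it is neither a zero nor a pole of $W_3$ (equivalently, of $W$) and the real part $-v$ of $W_3$ is nonzero there. On this set, one of the branch formulas (1)--(4) at the top of Section 2, now applied with $u_3 := -v$ and $v_3 := u$ in place of $u$ and $v$, expresses $\varphi_3$ locally as an $\arctan$ of $v_3/u_3 = -u/v$, so $\varphi_3$ is smooth in $(\sigma,t)$ there.

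Next I would invoke Lemma 2.1 with $W$ replaced by $W_3$, using the substitutions $u_3 = -v$ and $v_3 = u$. Routine simplification, using $(-v)^2 + u^2 = u^2 + v^2$ in the denominator and collecting signs in the numerator (where $(v_3)'_\sigma u_3 - (u_3)'_\sigma v_3 = u'_\sigma(-v) - (-v)'_\sigma u = v'_\sigma u - u'_\sigma v$, and similarly for the $t$-derivative), turns the two formulas of Lemma 2.1 into the two formulas claimed in the present statement.

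The only point requiring checking is that Lemma 2.1 legitimately applies to $W_3$ in the first place. Since $W_3 = iW$ inherits the zeros and poles of $W$ with the same multiplicities, and the proof of Lemma 2.1 is a direct quotient-rule differentiation of the branch of $\arg$ selected by (1)--(4), the same argument carries over without change. I anticipate no further obstacle; the lemma is essentially a symmetry statement obtained by exchanging the roles of real and imaginary parts through the multiplication-by-$i$ trick.
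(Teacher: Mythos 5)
Your proposal is correct and takes essentially the same route as the paper: the paper also introduces $W_3(s)=iW(s)=iu(\sigma,t)-v(\sigma,t)$, notes $\varphi_3=\tfrac{\pi}{2}+\varphi$ so the partial derivatives coincide, and derives Lemma 2.2 by applying Lemma 2.1 with the roles of real and imaginary parts exchanged. Your write-up merely makes explicit the sign bookkeeping ($u_3=-v$, $v_3=u$) that the paper leaves implicit.
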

We need to create another sub-set $\Delta_{vu}$ of set $\Delta_{v}$. Let $\Delta_{vu}=\{s=(\sigma+it)\in\mathbb{C}\cup\{\infty\}$, $
s$ is not zero or pole of $W(s)$, and there is $u(\sigma,t)=0\}$. According to Lemma 2.2, we can obtain Lemma 2.3.

\begin{lemma}
Let $s=(\sigma,t)\in\Delta_{vu}$, then we have:
$\frac{\partial{\varphi}}{\partial{\sigma}}(\sigma,t)=\frac{v^{'}_{\sigma}(\sigma,t)u(\sigma,t)-u^{'}_{\sigma}(\sigma,t)v(\sigma,t)}{u^2(\sigma,t)+v^2(\sigma,t)}$,
$\frac{\partial{\varphi}}{\partial{t}}(\sigma,t)=\frac{v^{'}_{t}(\sigma,t)u(\sigma,t)-u^{'}_{t}(\sigma,t)v(\sigma,t)}{u^2(\sigma,t)+v^2(\sigma,t)}$.
\end{lemma}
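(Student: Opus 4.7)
\bigskip

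\noindent\textbf{Proof proposal.} The plan is to reduce Lemma 2.3 directly to Lemma 2.2 via the auxiliary function $W_{3}(s)=iW(s)$ already introduced before Lemma 2.2. The point is that on $\Delta_{vu}$ one has $u(\sigma,t)=0$, so the direct representation $\varphi=\arctan(v/u)$ used to obtain Lemma 2.1 is singular and cannot be differentiated at $s$. The trick is to differentiate $\varphi_{3}$ instead, since $\varphi_{3}(\sigma,t)=\tfrac{\pi}{2}+\varphi(\sigma,t)$ implies
\[
\frac{\partial\varphi}{\partial\sigma}(\sigma,t)=\frac{\partial\varphi_{3}}{\partial\sigma}(\sigma,t),\qquad \frac{\partial\varphi}{\partial t}(\sigma,t)=\frac{\partial\varphi_{3}}{\partial t}(\sigma,t),
\]
so it suffices to compute the partial derivatives of $\varphi_{3}$ at $s$.

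First I would verify the inclusion $\Delta_{vu}\subset\Delta_{v}$: by the definition of $\Delta_{vu}$ one has $v(\sigma,t)\neq 0$ and $s$ is not a zero or pole of $W(s)$, which is exactly the defining condition of $\Delta_{v}$. Hence Lemma 2.2 applies at $s$ and yields
\[
\frac{\partial\varphi_{3}}{\partial\sigma}(\sigma,t)=\frac{v'_{\sigma}(\sigma,t)u(\sigma,t)-u'_{\sigma}(\sigma,t)v(\sigma,t)}{u^{2}(\sigma,t)+v^{2}(\sigma,t)},
\]
and the analogous formula for $\partial\varphi_{3}/\partial t$. Substituting these into the identity of the previous display gives exactly the two equalities claimed in Lemma 2.3.

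The only mild obstacle is the bookkeeping of the shift relation $\varphi_{3}=\tfrac{\pi}{2}+\varphi$: strictly speaking, $\varphi$ and $\varphi_{3}$ are defined on each of the four sign regions listed at the beginning of Section 2 by a case-by-case choice of branch of $\arctan$, and at points of $\Delta_{vu}$ the two functions lie on the boundary between two such regions for $W$ (where $u=0$) but in the interior of a region for $W_{3}$ (where the real part of $W_{3}$ is $-v\neq 0$). That is precisely why $\varphi_{3}$ is smooth at $s$ while $\varphi$ is defined by a one-sided expression; this justifies differentiating the identity $\varphi=\varphi_{3}-\tfrac{\pi}{2}$ at $s$ and concludes the argument.
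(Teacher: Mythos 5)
Your proposal follows exactly the route the paper intends: the paper's entire justification for Lemma 2.3 is the sentence ``According to Lemma 2.2, we can obtain Lemma 2.3,'' relying on the inclusion $\Delta_{vu}\subset\Delta_{v}$ and the previously stated identities $\varphi_{3}=\tfrac{\pi}{2}+\varphi$ and $\partial\varphi_{3}/\partial\sigma=\partial\varphi/\partial\sigma$, $\partial\varphi_{3}/\partial t=\partial\varphi/\partial t$. Your write-up is correct and in fact more careful than the paper's, since you also explain why $\varphi_{3}$ (unlike $\varphi$) is differentiable where $u=0$, namely because the real part of $W_{3}$ equals $-v\neq 0$ there.
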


$\Delta_u\cap\Delta_{vu}=\emptyset$, $\Delta_u\subset\Delta$, $\Delta_{vu}\subset\Delta$, and $\Delta=\Delta_u\cup\Delta_{vu}$.
So, we can sum up Lemma 2.1 and Lemma 2.3, we can obtain Theorem 2.4.

\begin{theorem}
Let $s=(\sigma,t)\in\Delta$, then we have:
$\frac{\partial{\varphi}}{\partial{\sigma}}(\sigma,t)=\frac{v^{'}_{\sigma}(\sigma,t)u(\sigma,t)-u^{'}_{\sigma}(\sigma,t)v(\sigma,t)}{u^2(\sigma,t)+v^2(\sigma,t)}$,
$\frac{\partial{\varphi}}{\partial{t}}(\sigma,t)=\frac{v^{'}_{t}(\sigma,t)u(\sigma,t)-u^{'}_{t}(\sigma,t)v(\sigma,t)}{u^2(\sigma,t)+v^2(\sigma,t)}$.
\end{theorem}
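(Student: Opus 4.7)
The plan is to exploit the decomposition of $\Delta$ already stated in the paragraph preceding the theorem, namely $\Delta=\Delta_u\cup\Delta_{vu}$ with $\Delta_u\cap\Delta_{vu}=\emptyset$, and then simply paste together the two formulas already proved in Lemma 2.1 and Lemma 2.3. Concretely, I would first observe that any $s=(\sigma,t)\in\Delta$ is not a zero or pole of $W(s)$, so $W(s)=u(\sigma,t)+iv(\sigma,t)$ is a finite nonzero complex number and the denominator $u^{2}(\sigma,t)+v^{2}(\sigma,t)$ that appears in the claimed formulas is automatically positive; this makes the right-hand sides well defined everywhere on $\Delta$.

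Then I would argue by cases according to whether $s$ lies in $\Delta_u$ or in $\Delta_{vu}$. If $s\in\Delta_u$, i.e.\ $u(\sigma,t)\neq 0$, Lemma 2.1 directly gives
\[
\frac{\partial\varphi}{\partial\sigma}(\sigma,t)=\frac{v^{\prime}_{\sigma}u-u^{\prime}_{\sigma}v}{u^{2}+v^{2}},\qquad
\frac{\partial\varphi}{\partial t}(\sigma,t)=\frac{v^{\prime}_{t}u-u^{\prime}_{t}v}{u^{2}+v^{2}},
\]
which is exactly the assertion. If instead $s\in\Delta_{vu}$, so that $u(\sigma,t)=0$ (and necessarily $v(\sigma,t)\neq 0$ since $s$ is not a zero of $W$), Lemma 2.3 provides the same two identities. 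Because $\Delta_u$ and $\Delta_{vu}$ together exhaust $\Delta$, the two formulas hold for every $s\in\Delta$, proving the theorem.

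I do not expect any essential obstacle: the theorem is a pure gluing statement. The only point that deserves a brief check is that the formula in the $u=0$ case — which was obtained in Lemma 2.3 through the auxiliary function $W_3(s)=iW(s)$ and the phase shift $\varphi_3=\varphi+\pi/2$ — does indeed reduce to the same rational expression in $u,v,u^{\prime}_{\sigma},v^{\prime}_{\sigma},u^{\prime}_{t},v^{\prime}_{t}$ as the $u\neq 0$ case, so that the two pieces agree syntactically and can be combined into a single unified formula on $\Delta$. Once that bookkeeping is noted, the proof reduces to citing the two previous lemmas on the two disjoint pieces of their common domain.
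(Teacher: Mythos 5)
Your proposal is correct and follows exactly the paper's own route: the paper likewise notes that $\Delta_u\cap\Delta_{vu}=\emptyset$ and $\Delta=\Delta_u\cup\Delta_{vu}$ and then obtains Theorem 2.4 by combining Lemma 2.1 on $\Delta_u$ with Lemma 2.3 on $\Delta_{vu}$. Your additional remarks (that the denominator $u^2+v^2$ is nonzero on $\Delta$, and that the formula from the $W_3=iW$ detour agrees syntactically with the $u\neq 0$ case) are sensible bookkeeping that the paper leaves implicit, but the argument is essentially identical.
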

In many textbooks of the complex variable function of one variable, the following results exist. $\frac{dW(s)}{ds}=\frac{\partial{u(\sigma,t)}}{\partial{\sigma}}+i\frac{\partial{v(\sigma,t)}}{\partial{\sigma}}=\frac{\partial{u(\sigma,t)}}{\partial{t}}-i\frac{\partial{v(\sigma,t)}}{\partial{t}}$. $\frac{\partial{u(\sigma,t)}}{\partial{\sigma}}=\frac{\partial{v(\sigma,t)}}{\partial{t}}$, $\frac{\partial{v(\sigma,t)}}{\partial{\sigma}}=-\frac{\partial{u(\sigma,t)}}{\partial{t}}$.

\begin{theorem}
 For any point $s=(\sigma,t)\in\Delta$, we have $\frac{\partial{\varphi}}{\partial{t}}(\sigma,t)=Re({\frac{\frac{dW(s)}{ds}}{W(s)}})$, $\frac{\partial{\varphi}}{\partial{\sigma}}(\sigma,t)=Im({\frac{\frac{dW(s)}{ds}}{W(s)}})$
\end{theorem}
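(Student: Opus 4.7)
The plan is to reduce the claimed identities to a direct computation of $W'(s)/W(s)$ in terms of real and imaginary parts, and then invoke Theorem 2.4 together with the Cauchy--Riemann equations recorded just before the statement. Concretely, starting from $W(s)=u(\sigma,t)+iv(\sigma,t)$ and the formula $\frac{dW}{ds}=\frac{\partial u}{\partial\sigma}+i\frac{\partial v}{\partial\sigma}$, I would first form the quotient
\begin{equation*}
\frac{W'(s)}{W(s)}=\frac{u'_{\sigma}+iv'_{\sigma}}{u+iv}=\frac{(u'_{\sigma}+iv'_{\sigma})(u-iv)}{u^{2}+v^{2}}
\end{equation*}
and read off
\begin{equation*}
\operatorname{Re}\!\left(\tfrac{W'(s)}{W(s)}\right)=\frac{u'_{\sigma}u+v'_{\sigma}v}{u^{2}+v^{2}},\qquad
\operatorname{Im}\!\left(\tfrac{W'(s)}{W(s)}\right)=\frac{v'_{\sigma}u-u'_{\sigma}v}{u^{2}+v^{2}}.
\end{equation*}

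Next I would compare these expressions with the formulas supplied by Theorem 2.4. The imaginary-part identity matches directly: $\partial\varphi/\partial\sigma=(v'_{\sigma}u-u'_{\sigma}v)/(u^{2}+v^{2})$ is exactly $\operatorname{Im}(W'/W)$, so that half of the theorem is immediate. For the real-part identity, I would substitute the Cauchy--Riemann relations $u'_{\sigma}=v'_{t}$ and $v'_{\sigma}=-u'_{t}$ into $\operatorname{Re}(W'/W)$, obtaining
\begin{equation*}
\operatorname{Re}\!\left(\tfrac{W'(s)}{W(s)}\right)=\frac{v'_{t}u-(-u'_{t})(-v)\ldots}{u^{2}+v^{2}}=\frac{v'_{t}u-u'_{t}v}{u^{2}+v^{2}},
\end{equation*}
which is exactly the expression given for $\partial\varphi/\partial t$ in Theorem 2.4.

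There is no real obstacle here beyond verifying the sign bookkeeping in the Cauchy--Riemann substitution; the content of the theorem is essentially that Theorem 2.4 can be repackaged as a statement about $W'/W$ once one applies Cauchy--Riemann to trade $\sigma$-derivatives of $u,v$ for $t$-derivatives. The only mild care needed is that Theorem 2.4 is stated on all of $\Delta$, so the proof is uniform across the subcases $\Delta_{u}$ and $\Delta_{vu}$ that appeared in Section 2 and no separate case analysis is required.
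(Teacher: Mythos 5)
Your proposal is correct and follows essentially the same route as the paper: compute $W'(s)/W(s)$ by multiplying by the conjugate, identify the imaginary part directly with $\partial\varphi/\partial\sigma$ from Theorem 2.4, and apply the Cauchy--Riemann equations to convert the real part $\frac{u'_{\sigma}u+v'_{\sigma}v}{u^{2}+v^{2}}$ into $\frac{v'_{t}u-u'_{t}v}{u^{2}+v^{2}}=\partial\varphi/\partial t$. The only cosmetic issue is the garbled intermediate expression $v'_{t}u-(-u'_{t})(-v)$ in your display, which does evaluate to the right thing but should be written cleanly as $v'_{t}u+(-u'_{t})v=v'_{t}u-u'_{t}v$.
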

\begin{proof}
$\frac{\frac{dW(s)}{ds}}{W(s)}=\frac{\frac{\partial{u(\sigma,t)}}{\partial{\sigma}}+i\frac{\partial{v(\sigma,t)}}{\partial{\sigma}}}{u(\sigma,t)+iv(\sigma,t)}
=\frac{\frac{\partial{u(\sigma,t)}}{\partial{\sigma}}u(\sigma,t)+\frac{\partial{v(\sigma,t)}}{\partial{\sigma}}v(\sigma,t)+i(\frac{\partial{v(\sigma,t)}}{\partial{\sigma}}u(\sigma,t)-\frac{\partial{u(\sigma,t)}}{\partial{\sigma}}v(\sigma,t))}{u(\sigma,t)^2+v(\sigma,t)^2}
=\frac{\frac{\partial{v(\sigma,t)}}{\partial{t}}u(\sigma,t)-\frac{\partial{u(\sigma,t)}}{\partial{t}}v(\sigma,t)}{u(\sigma,t)^2+v(\sigma,t)^2}+i\frac{\frac{\partial{v(\sigma,t)}}{\partial{\sigma}}u(\sigma,t)-\frac{\partial{u(\sigma,t)}}{\partial{\sigma}}v(\sigma,t)}{u(\sigma,t)^2+v(\sigma,t)^2}
$.

Obviously, according to Theorem 2.4, we can obtain Theorem 2.5.
\end{proof}
According to Theorem 2.5, obviously, we can obtain Theorem 2.6.

\begin{theorem}
For any point $s=(\sigma,t)\in\Delta$, if $\frac{\partial{\varphi}}{\partial{t}}(\sigma,t)=0$, then $Re({\frac{\frac{dW(s)}{ds}}{W(s)}})=0$. If  $\frac{\partial{\varphi}}{\partial{\sigma}}(\sigma,t)=0$, then $Im({\frac{\frac{dW(s)}{ds}}{W(s)}})=0$. Conversely, if $Re({\frac{\frac{dW(s)}{ds}}{W(s)}})=0$, then $\frac{\partial{\varphi}}{\partial{t}}(\sigma,t)=0$. If $Im({\frac{\frac{dW(s)}{ds}}{W(s)}})=0$, then $\frac{\partial{\varphi}}{\partial{\sigma}}(\sigma,t)=0$.
\end{theorem}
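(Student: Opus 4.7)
The plan is to observe that Theorem 2.6 is an immediate biconditional restatement of the two equalities already proved in Theorem 2.5. Since Theorem 2.5 asserts, for every $s=(\sigma,t)\in\Delta$,
\[
\frac{\partial\varphi}{\partial t}(\sigma,t)=\mathrm{Re}\!\left(\frac{dW(s)/ds}{W(s)}\right), \qquad
\frac{\partial\varphi}{\partial\sigma}(\sigma,t)=\mathrm{Im}\!\left(\frac{dW(s)/ds}{W(s)}\right),
\]
each of the four implications in Theorem 2.6 amounts to substituting ``$0$'' on one side of one of these equalities and reading off that the other side is also $0$.

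First I would restate the two identities from Theorem 2.5 explicitly at the arbitrary point $s=(\sigma,t)\in\Delta$. Second, I would carry out the four one-line implications in sequence: (i) if $\partial\varphi/\partial t=0$, the first identity forces $\mathrm{Re}\bigl((dW/ds)/W\bigr)=0$; (ii) if $\partial\varphi/\partial\sigma=0$, the second identity forces $\mathrm{Im}\bigl((dW/ds)/W\bigr)=0$; (iii) and (iv) are obtained by reading the same two equalities from right to left. Because $\Delta$ by construction excludes the zeros and poles of $W(s)$, the ratio $(dW/ds)/W(s)$ is a well-defined finite complex number whose real and imaginary parts exist, so the substitutions are legal.

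The only point worth verifying is the admissibility of the quotient $W'(s)/W(s)$, and this is built into the definition of $\Delta$; there is no further analytic work. All of the genuine content (the passage from the phase-angle partial derivatives through the Cauchy--Riemann equations to the logarithmic-derivative formula) was absorbed into the proof of Theorem 2.5, so the main obstacle -- if one can call it that -- has already been cleared, and Theorem 2.6 is a purely logical repackaging convenient for later sections of the paper.
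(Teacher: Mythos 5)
Your proposal is correct and matches the paper exactly: the paper gives no separate argument for Theorem 2.6, stating only that it follows ``obviously'' from the identities of Theorem 2.5, which is precisely the substitute-zero-and-read-off reasoning you spell out. Your added remark that $\Delta$ excludes the zeros and poles of $W(s)$, so the quotient $W'(s)/W(s)$ is well defined, is the only detail the paper leaves implicit.
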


\begin{theorem}
Let $s_0\in\Delta$, except the situation which the point $s_0$ is an infinite point in $\mathbb{C}\cup\{\infty\}$. Then $s_0$ is the finite zero of $W_3^{'}(s)$
if and only if $\frac{\partial{\varphi}}{\partial{\sigma}}(\sigma,t)\mid{_{s=s_0}}=0$
and $\frac{\partial{\varphi}}{\partial{t}}(\sigma,t)\mid{_{s=s_0}}=0$ hold simultaneously.
\end{theorem}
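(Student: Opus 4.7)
The plan is to reduce the statement directly to Theorem 2.5 after observing the relationship between $W_3'$ and $W'$. Since $W_3(s) = iW(s)$, differentiating gives $W_3'(s) = iW'(s)$, so the finite zeros of $W_3'$ coincide exactly with the finite zeros of $W'$. Thus it suffices to show that for finite $s_0 \in \Delta$, $W'(s_0) = 0$ if and only if both partial derivatives of $\varphi$ vanish at $s_0$.

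To establish the biconditional, I would invoke Theorem 2.5, which identifies
\[
\frac{\partial \varphi}{\partial t}(\sigma,t) = \mathrm{Re}\!\left(\frac{W'(s)}{W(s)}\right), \qquad
\frac{\partial \varphi}{\partial \sigma}(\sigma,t) = \mathrm{Im}\!\left(\frac{W'(s)}{W(s)}\right).
\]
Hence both partials vanish simultaneously at $s_0$ exactly when $W'(s_0)/W(s_0) = 0$ as a complex number. The hypothesis $s_0 \in \Delta$ is precisely what makes this equivalence safe: by the definition of $\Delta$ given just before Lemma 1.4, $s_0$ is neither a zero nor a pole of $W(s)$, so $W(s_0)$ is a well-defined, finite, nonzero complex number. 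Consequently $W'(s_0)/W(s_0) = 0$ if and only if $W'(s_0) = 0$, and combined with $W_3'(s) = iW'(s)$ this gives $W_3'(s_0) = 0$.

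For the converse direction I would run the same chain of equivalences backwards: if $s_0$ is a finite zero of $W_3'$, then $W'(s_0) = 0$ from $W_3' = iW'$; dividing by the finite nonzero value $W(s_0)$ gives $W'(s_0)/W(s_0) = 0$; then Theorem 2.5 yields vanishing of both partials of $\varphi$ at $s_0$. The exclusion of the point at infinity in the hypothesis is used here to ensure that the derivative $W'(s_0)$ and the quotient $W'(s_0)/W(s_0)$ are evaluated in the ordinary finite sense rather than via a limiting procedure at $\infty$.

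There is no substantial obstacle: the theorem is essentially a rewriting of Theorem 2.5 once one notes that $W_3'$ and $W'$ share the same finite zeros and that membership in $\Delta$ guarantees $W(s_0)$ is a nonzero finite denominator. The only point requiring care in the write-up is making explicit why the finiteness assumption on $s_0$ together with $s_0 \in \Delta$ allows the passage between the vanishing of $W'(s_0)/W(s_0)$ and the vanishing of $W'(s_0)$ itself.
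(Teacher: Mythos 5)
Your proposal is correct and follows essentially the same route as the paper: both arguments reduce the statement to Theorem 2.5, using the fact that $s_0\in\Delta$ makes $W(s_0)$ a finite nonzero number so that the vanishing of $W'(s_0)/W(s_0)$ is equivalent to the vanishing of $W'(s_0)$, with the point at infinity excluded for the same reason. If anything, you are slightly more careful than the paper in making explicit the identity $W_3'(s)=iW'(s)$, which the paper's proof uses tacitly when it passes from the zeros of $W_3'$ in the statement to the zeros of $W'$ in the argument.
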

\begin{proof}
On the zeros $s_0$ of $W^{'}(s)$, ${\frac{\frac{dW(s)}{ds}}{W(s)}}\mid{_{s=s_0}}=0$, except the situation which the point $s_0$ is an infinite point in $\mathbb{C}\cup\{\infty\}$. So, the two equations $Re({\frac{\frac{dW(s)}{ds}}{W(s)}})\mid{_{s=s_0}}=0$ and $Im({\frac{\frac{dW(s)}{ds}}{W(s)}})\mid{_{s=s_0}}=0$ are true simultaneously, so, two partial derivative equations  $\frac{\partial{\varphi}}{\partial{t}}(\sigma,t)\mid{_{s=s_0}}=0$ and $\frac{\partial{\varphi}}{\partial{\sigma}}(\sigma,t)\mid{_{s=s_0}}=0$ are true simultaneously.

Conversely, except zeros and poles of the meromorphic function $W(s)$, and except the situation which the point $s_0$ is an infinite point in $\mathbb{C}\cup\{\infty\}$. When two partial derivative equations $\frac{\partial{\varphi}}{\partial{t}}(\sigma,t)\mid{_{s=s_0}}=0$ and $\frac{\partial{\varphi}}{\partial{\sigma}}(\sigma,t)\mid{_{s=s_0}}=0$  are true simultaneously. Then, two equations $Re({\frac{\frac{dW(s)}{ds}}{W(s)}})\mid{_{s=s_0}}=0$ and $Im({\frac{\frac{dW(s)}{ds}}{W(s)}})\mid{_{s=s_0}}=0$  are true simultaneously, so, the equation ${\frac{\frac{dW(s)}{ds}}{W(s)}}\mid{_{s=s_0}}=0$ is true. So, when two partial derivative equations  $\frac{\partial{\varphi}}{\partial{t}}(\sigma,t)\mid{_{s=s_0}}=0$ and $\frac{\partial{\varphi}}{\partial{\sigma}}(\sigma,t)\mid{_{s=s_0}}=0$  are true simultaneously. Such points are the non-repeated zeros of derivative of the meromorphic function  $W(s)$.

On the infinite points in $\mathbb{C}\cup\{\infty\}$, $\frac{\frac{dW(s)}{ds}}{W(s)}\mid{_{s=s_0}}=0$, but, in most situations, two equations  $\frac{\partial{\varphi}}{\partial{\sigma}}(\sigma,t)\mid{_{s=s_0}}=0$
and $\frac{\partial{\varphi}}{\partial{t}}(\sigma,t)\mid{_{s=s_0}}=0$ can not be true simultaneously. So, we need to exclude the infinite zeros of $W^{'}(s)$ in this lemma.
\end{proof}
Let $\Delta_{d}=\{s=(\sigma+it)\in\mathbb{C}\cup\{\infty\}$, $s$ is not zero or pole of $W(s)$, the point $s$ is not an infinite point in $\mathbb{C}\cup\{\infty\}$. $s$ is not the finite zeros of derivative of the function $W(s)\}$.
\begin{theorem}
For any point $s=(\sigma,t)\in\Delta_{d}$, the partial derivative of the phase angle of $W(s)$ concerning the variable $\sigma$ and variable $t$ cannot simultaneously equal to 0.
\end{theorem}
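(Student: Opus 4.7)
The plan is to derive Theorem 2.8 directly as the contrapositive of the biconditional in Theorem 2.7, exploiting the fact that $\Delta_d$ has been defined precisely so that every obstruction in Theorem 2.7 is ruled out.

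First I would suppose, for contradiction, that there exists a point $s_0=(\sigma_0,t_0)\in\Delta_d$ at which both
\[
\frac{\partial\varphi}{\partial\sigma}(\sigma,t)\Big|_{s=s_0}=0\quad\text{and}\quad\frac{\partial\varphi}{\partial t}(\sigma,t)\Big|_{s=s_0}=0
\]
hold simultaneously. Since $s_0\in\Delta_d\subset\Delta$, by construction $s_0$ is not a zero or pole of $W(s)$, so the formulas of Theorem 2.4 apply at $s_0$ and the quantity $\frac{dW/ds}{W(s)}$ is well-defined there. Moreover $s_0$ is not an infinite point in $\mathbb{C}\cup\{\infty\}$, so the exclusion clause at the end of the proof of Theorem 2.7 does not interfere.

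Next I would invoke Theorem 2.7 in the direction: if both partial derivatives of $\varphi$ vanish at such a finite, non-zero/non-pole point $s_0$, then $s_0$ is a finite zero of $W'(s)$. This follows because the simultaneous vanishing of the two partials forces both $\mathrm{Re}(\frac{dW/ds}{W(s)})\big|_{s_0}=0$ and $\mathrm{Im}(\frac{dW/ds}{W(s)})\big|_{s_0}=0$ by Theorem 2.6, hence $\frac{dW/ds}{W(s)}\big|_{s_0}=0$, and since $W(s_0)$ is finite and nonzero, this gives $W'(s_0)=0$.

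But this contradicts the defining condition of $\Delta_d$, which explicitly excludes the finite zeros of the derivative $W'(s)$. Hence no such $s_0$ exists, and the two partial derivatives of $\varphi$ cannot vanish simultaneously on $\Delta_d$. There is no real obstacle here beyond correctly bookkeeping the four exclusions that build $\Delta_d$; the content is entirely contained in Theorems 2.6 and 2.7, and Theorem 2.8 is essentially a restatement of their contrapositive on the appropriately restricted domain.
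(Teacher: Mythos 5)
Your proposal is correct and follows essentially the same route as the paper: both arguments apply Theorem 2.7 in the direction that simultaneous vanishing of $\frac{\partial\varphi}{\partial\sigma}$ and $\frac{\partial\varphi}{\partial t}$ forces $s_0$ to be a finite zero of $W'(s)$, and then conclude by noting that $\Delta_d$ excludes exactly those points. Your version merely recasts the paper's contrapositive as an explicit contradiction and unpacks the intermediate step through Theorem 2.6, which the paper leaves implicit inside its citation of Theorem 2.7.
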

\begin{proof}
According to Theorem 2.7, if two equations $\frac{\partial{\varphi}}{\partial{t}}(\sigma,t)=0$ and $\frac{\partial{\varphi}}{\partial{\sigma}}(\sigma,t)=0$ hold simultaneously, then the point $s$ is surely a finite zero of the derivative $W^{'}(s)$ of function $W(s)$. So, except the finite zeros and poles of function $W(s)$, except the infinite points in $\mathbb{C}\cup\{\infty\}$, and also except the finite zeros of derivative of the function  $W(s)$, and on other finite points in the extended complex plane  $\mathbb{C}\cup\{\infty\}$, the partial derivative of the phase angle of the function  $W(s)$ concerning the variable $t$ and variable $\sigma$ cannot simultaneously equal to 0.
\end{proof}

\begin{remark}
Let $s_0=(\sigma_0+it_0)\in\Delta_d$, then, if $\frac{\partial{\varphi}}{\partial{\sigma}}(\sigma,t)\mid{_{s=s_0}}=0$,
we have $\frac{\partial{\varphi}}{\partial{t}}(\sigma,t)\mid{_{s=s_0}}\neq 0$;
or, if $\frac{\partial{\varphi}}{\partial{t}}(\sigma,t)\mid{_{s=s_0}}=0$, we have
$\frac{\partial{\varphi}}{\partial{\sigma}}(\sigma,t)\mid{_{s=s_0}}\neq0$.
\end{remark}

When $2q\pi+arg{(b/a)}$ is a constant, $\varphi(\sigma,t)=2q\pi+arg{(b/a)}$, it is a function equation which contain two real variables, and it is the phase angle condition equation of Eq(1.1). According to Theorem 1.7, except zeros and poles of Eq(1.1), if the roots of this function equation exist in $\mathbb{C}\cup\{\infty\}$, the roots of Eq(1.1) also exist in $\mathbb{C}\cup\{\infty\}$. If the roots of this function equation constitute the root locus in $\mathbb{C}\cup\{\infty\}$, the roots of Eq(1.1) also constitute the root locus in $\mathbb{C}\cup\{\infty\}$.

In order to prove the existence and continuity of the roots of the implicit function equation $\varphi(\sigma,t)=2q\pi+arg{(b/a)}$, we need to cite the implicit function theorem and involved concepts\cite{Stein}.

\begin{definition}
Let $f:R^{n+m}\longrightarrow R^{m}$ be a continuously differentiable function. We think of $R^{n+m}$ as the Cartesian product $R^{n}\times R^{m}$, and we write a point of this product as
$(x,y)=(x_1, \cdots, x_n. y_1, \cdots, y_m) $. Starting from the given function $f$, our goal is to construct a function $g:R^{n}\longrightarrow R^{m}$ whose graph $(x,g(x))$ is precisely the set of all $(x,y)$ such that $f(x,y)=0$.
\end{definition}
As noted above, this may not always be possible. We will therefore fix a point $(a,b)=(a_1, \cdots, a_n. b_1, \cdots, b_m)$ which satisfies $f(a,b)=0$, and we will ask for a g that works near the point $(a,b)$. In other words, we want an open set $U$ of $R^{n}$ containing $a$, an open set $V$ of $R^{m}$ containing $b$, and a function $g:U\longrightarrow V$ such that the graph of $g$ satisfies the relation $f(x,y)=0$ on $U\times V$, and that no other points within $U\times V$ do so. In symbols,  $((x,g(x))|x\in U)$ = $((x,y)\in U\times V|f(x,y)=0)$.

To state the implicit function theorem, we need the Jacobian matrix of $f$, which is the matrix of the partial derivatives of $f$. Abbreviating $(a_1, \cdots, a_n. b_1, \cdots, b_m)$ to $(a,b)$, the Jacobian matrix is

$(Df)(a,b)=[
\begin{array}{ccc}
\frac{\partial{f_1}}{\partial{x_1}}(a,b)&\cdots&\frac{\partial{f_1}}{\partial{x_n}}(a,b)\\
\vdots&\ddots&\vdots\\
\frac{\partial{f_m}}{\partial{x_1}}(a,b)&\cdots&\frac{\partial{f_m}}{\partial{x_n}}(a,b)
\end{array}
|\begin{array}{ccc}
\frac{\partial{f_1}}{\partial{y_1}}(a,b)&\cdots&\frac{\partial{f_1}}{\partial{y_m}}(a,b)\\
\vdots&\ddots&\vdots\\
\frac{\partial{f_m}}{\partial{y_1}}(a,b)&\cdots&\frac{\partial{f_m}}{\partial{y_m}}(a,b)
\end{array}]$
$=[X|Y]$.

Where $X$ is the matrix of partial derivatives in the variables $x_l$ and $Y$ is the matrix of partial derivatives in the variables $y_j$. The implicit function theorem says that if $Y$ is an invertible matrix, then there are $U$, $V$, and $g$ as desired. Writing all the hypotheses together gives the following statement.

The implicit function theorem concerning variable $x$.

\begin{theorem}
Let $f:R^{n+m}\longrightarrow R^{m}$ be a continuously differentiable function, and let $R^{n+m}$ have coordinates $(x,y)$. Fix a point $(a,b)=(a_1, \cdots, a_n. b_1, \cdots, b_m)$ with $f(a,b)=c$, where $c\in R^{m}$. If the Jacobian matrix $J_{f,y}(a,b)=[(\partial f_l/\partial y_j)(a,b)]$ is invertible, then there exists an open set $U$ containing $a$, an open set $V$ containing $b$, and a unique continuously differentiable function $g:U\longrightarrow V$ such that $((x,g(x))|x\in U)$ = $((x,y)\in U\times V|f(x,y)=c)$.
\end{theorem}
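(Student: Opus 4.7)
The plan is to reduce the statement to the classical Banach fixed-point theorem, which is the standard route to the implicit function theorem. First I would normalize the problem: by replacing $f(x,y)$ with $f(x,y)-c$ I may assume $c=0$, and by a linear change of coordinates in the $y$-variable (composing $f$ with $J_{f,y}(a,b)^{-1}$) I may assume the partial Jacobian $J_{f,y}(a,b)$ equals the identity $I_m$. Writing a point of $U\times V$ as $(x,y)$ with $x$ near $a$ and $y$ near $b$, I then introduce the auxiliary map
\begin{equation*}
T_x(y) \;=\; y \;-\; f(x,y),
\end{equation*}
whose fixed points in $y$ are precisely the solutions of $f(x,y)=0$.

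The key step is to show that $T_x$ is a contraction on a small closed ball $\overline{B}(b,r)\subset R^m$, uniformly for $x$ in a small ball $\overline{B}(a,\delta)\subset R^n$, and that it maps this ball into itself. For the contraction property I would compute $D_y T_x(y) = I_m - J_{f,y}(x,y)$ and use continuity of $J_{f,y}$ together with $J_{f,y}(a,b)=I_m$ to choose $r,\delta>0$ so small that $\|D_y T_x(y)\|\le \tfrac{1}{2}$ for all $(x,y)\in \overline{B}(a,\delta)\times\overline{B}(b,r)$; the mean value inequality in $R^m$ then gives $\|T_x(y_1)-T_x(y_2)\|\le \tfrac12\|y_1-y_2\|$. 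For the self-mapping property I would estimate $\|T_x(b)-b\| = \|f(x,b)\|$, which is small when $x$ is near $a$ by continuity of $f$ and $f(a,b)=0$; shrinking $\delta$ ensures $\|T_x(b)-b\|\le r/2$, and the contraction estimate then yields $T_x(\overline{B}(b,r))\subset \overline{B}(b,r)$.

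With these two ingredients the Banach fixed-point theorem produces, for each $x\in U:=B(a,\delta)$, a unique $g(x)\in V:=B(b,r)$ with $f(x,g(x))=0$. This immediately gives the graph equality $\{(x,g(x)):x\in U\}=\{(x,y)\in U\times V:f(x,y)=0\}$. Continuity of $g$ follows from the uniform contraction: if $x_n\to x$ in $U$, then
\begin{equation*}
\|g(x_n)-g(x)\| \;=\; \|T_{x_n}(g(x_n))-T_{x}(g(x))\| \;\le\; \tfrac12\|g(x_n)-g(x)\| + \|T_{x_n}(g(x))-T_x(g(x))\|,
\end{equation*}
and the last term tends to $0$ by continuity of $f$ in $x$. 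Finally, for continuous differentiability I would show that $J_{f,y}(x,g(x))$ stays invertible on $U$ (shrinking $\delta$ if necessary so that it remains close to $I_m$), and then derive the candidate derivative $Dg(x) = -\,J_{f,y}(x,g(x))^{-1} J_{f,x}(x,g(x))$ by differentiating the identity $f(x,g(x))=0$; the formal verification that this candidate really is the Fréchet derivative of $g$ uses the already-established Lipschitz continuity of $g$ together with the differentiability of $f$, and continuity of $Dg$ follows because $f\in C^1$ and matrix inversion is continuous.

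The main obstacle I expect is the differentiability step: establishing that $g$ is not merely continuous but $C^1$ requires the careful $o(\cdot)$-estimate that separates the linear part of $f$ from its remainder and then inverts the $y$-block of $Df$. The contraction and continuity parts are almost mechanical once the normalization $J_{f,y}(a,b)=I_m$ is in place, but the differentiability argument is where one must be precise about uniformity of the Taylor remainder in a neighborhood, rather than only at the base point $(a,b)$.
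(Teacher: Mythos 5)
This statement is the classical implicit function theorem; the paper does not prove it at all, but explicitly quotes it as a known result to be applied later to the phase-angle equation, so there is no internal proof to compare against. Your contraction-mapping argument is the standard textbook proof of exactly this statement and is sound in outline: the normalization $c=0$ and $J_{f,y}(a,b)=I_m$, the auxiliary map $T_x(y)=y-f(x,y)$, the uniform contraction constant $\tfrac12$ on $\overline{B}(a,\delta)\times\overline{B}(b,r)$ obtained from continuity of $J_{f,y}$, the self-mapping estimate via $\|f(x,b)\|\le r/2$, and the continuity estimate $\|g(x_n)-g(x)\|\le 2\,\|f(x_n,g(x))-f(x,g(x))\|$ are all correct. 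Two points to tighten in a full write-up: (i) the theorem asks for \emph{open} sets $U,V$ with the graph equality in $U\times V$, whereas Banach's theorem gives existence and uniqueness in the closed ball, so you should verify that $g(x)$ lies in the open ball (your estimate gives $\|g(x)-b\|\le 2\|f(x,b)\|<r$ once $\delta$ is small enough) and that every solution in the open ball is captured by the closed-ball uniqueness; (ii) as you yourself flag, the $C^1$ step needs the invertibility of $J_{f,y}(x,g(x))$ for all $x\in U$ and a uniform $o(\cdot)$ remainder estimate, both of which follow from continuity of $Df$ after shrinking $\delta$, together with the Lipschitz bound on $g$ that the contraction already provides. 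With those details supplied, the proposal is a complete and standard proof of the cited theorem.
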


The implicit function theorem concerning variable $y$.

\begin{theorem}
Let $f:R^{n+m}\longrightarrow R^{n}$ be a continuously differentiable function, and let $R^{n+m}$ have coordinates $(x,y)$. Fix a point $(a,b)=(a_1, \cdots, a_n. b_1, \cdots, b_m)$ with $f(a,b)=c$, where $c\in R^{n}$. If the Jacobian matrix $J_{f,x}(a,b)=[(\partial f_l/\partial x_k)(a,b)]$ is invertible, then there exists an open set $V$ containing $b$, an open set  $U$ containing $a$, and a unique continuously differentiable function $h:V\longrightarrow U$ such that  $((h(y),y)|y\in V)$ = $((x,y)\in U\times V|f(x,y)=c)$.
\end{theorem}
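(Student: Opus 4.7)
The plan is to reduce Theorem 2.12 to Theorem 2.11 by simply interchanging the roles of the $x$ and $y$ coordinates, since the two statements differ only by which block of variables plays the ``parameter'' role and which plays the ``resolved'' role. First I would define an auxiliary function $\tilde f:\mathbb{R}^{m+n}\longrightarrow\mathbb{R}^{n}$ by setting $\tilde f(y,x):=f(x,y)$, together with the swapped base point $(\tilde a,\tilde b):=(b,a)\in\mathbb{R}^{m}\times\mathbb{R}^{n}$. Continuous differentiability of $f$ transfers verbatim to $\tilde f$, and $\tilde f(\tilde a,\tilde b)=f(a,b)=c$. So the setup for Theorem 2.11 is in place, provided we check the correct block of the Jacobian of $\tilde f$ is invertible.

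Next I would compute the Jacobian partition of $\tilde f$ at $(\tilde a,\tilde b)$. Writing $\tilde f$'s first block of variables as $y\in\mathbb{R}^{m}$ and second block as $x\in\mathbb{R}^{n}$, the partial-derivative block of $\tilde f$ in its \emph{second} group of variables is exactly the matrix $[\partial f_l/\partial x_k(a,b)]=J_{f,x}(a,b)$, which is invertible by hypothesis. Theorem 2.11 then produces open sets $\tilde U\subset\mathbb{R}^{m}$ containing $\tilde a=b$ and $\tilde V\subset\mathbb{R}^{n}$ containing $\tilde b=a$, together with a unique continuously differentiable $\tilde g:\tilde U\longrightarrow\tilde V$ such that
\[
\{(y,\tilde g(y))\mid y\in\tilde U\}=\{(y,x)\in\tilde U\times\tilde V\mid\tilde f(y,x)=c\}.
\]

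To finish, I would translate back to the original variables. Set $V:=\tilde U$ (an open neighborhood of $b$ in $\mathbb{R}^{m}$), $U:=\tilde V$ (an open neighborhood of $a$ in $\mathbb{R}^{n}$), and $h:=\tilde g:V\longrightarrow U$. Since $\tilde f(y,x)=c$ is literally the same equation as $f(x,y)=c$, the graph identity above becomes
\[
\{(h(y),y)\mid y\in V\}=\{(x,y)\in U\times V\mid f(x,y)=c\},
\]
which is the required conclusion; uniqueness and continuous differentiability of $h$ are inherited directly from the corresponding properties of $\tilde g$.

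There is essentially no substantive obstacle, because once one recognizes that Theorem 2.12 is Theorem 2.11 with the two groups of variables relabeled, the entire content of the proof is bookkeeping: verifying that the swap $\tilde f(y,x):=f(x,y)$ carries the hypothesis ``$J_{f,x}$ invertible'' onto the hypothesis ``Jacobian in the second variable block invertible'' required by Theorem 2.11. The only place one must be slightly careful is the matching of dimensions ($\mathbb{R}^{n}$ target rather than $\mathbb{R}^{m}$) and the fact that the invertible block is now of size $n\times n$ rather than $m\times m$; this is automatic from the statement but worth pointing out to the reader so the application of Theorem 2.11 is unambiguous.
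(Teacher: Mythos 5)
Your proposal is correct: defining $\tilde f(y,x):=f(x,y)$ and applying Theorem 2.11 with the two blocks of variables (and the dimensions $n$ and $m$) interchanged does yield Theorem 2.12, and the bookkeeping you describe — checking that the invertible block of the Jacobian of $\tilde f$ in its second group of variables is exactly $J_{f,x}(a,b)$, now of size $n\times n$ to match the target $\mathbb{R}^{n}$ — is exactly the point that needs care. Note, however, that the paper supplies no proof of this statement at all: Theorems 2.11 and 2.12 are simply quoted as the two symmetric forms of the classical implicit function theorem, with a citation to Stein--Shakarchi. So there is nothing in the paper to compare your argument against; your relabeling reduction is the standard way to pass from one form to the other and is sound as written.
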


\begin{lemma}
For the root locus of the arbitrary $2q\pi+\alpha$ degree of Eq(1.1). And the locus which the points that satisfy the implicit function equation $ \varphi(\sigma,t)=2q\pi+arg{(b/a)}$ are generated in the two-dimensional real plane. The two locus are totally the same, and their properties like continuity are equivalent.
\end{lemma}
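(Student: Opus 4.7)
The plan is to establish the equivalence of the two loci in two stages: first as point sets in $\mathbb{C}\cup\{\infty\}$, and then with respect to their local continuity and differentiability structure.

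For the set-theoretic identification, I would invoke Theorem 1.7 directly. That theorem asserts that a point $s\in\Delta$ lies on the $2q\pi+\alpha$ degree root locus of Eq(1.1) if and only if it satisfies the phase angle condition $\varphi(\sigma,t)=2q\pi+\arg(b/a)$. Hence the two loci coincide on $\Delta$. The remaining points are the finite zeros and finite poles of Eq(1.1) together with the distinguished points at infinity; by Lemmas 1.11, 1.12, and 1.16 these points lie on the root locus for every degree, and the argument function exhibits the corresponding limiting behaviour in a neighbourhood of each such singularity. Adjoining them to both sides yields the set equality in $\mathbb{C}\cup\{\infty\}$.

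For the equivalence of continuity properties, I would apply the implicit function theorem (Theorem 1.20 or Theorem 1.21) to the real-valued function $F(\sigma,t)=\varphi(\sigma,t)-(2q\pi+\arg(b/a))$. At any point $s_0=(\sigma_0,t_0)\in\Delta_d$ with $F(\sigma_0,t_0)=0$, Theorem 2.8 together with Remark 2.9 guarantees that at least one of $\partial\varphi/\partial\sigma$ and $\partial\varphi/\partial t$ is non-zero at $s_0$, which is precisely the invertibility condition on the single-variable Jacobian of $F$. Consequently, on a neighbourhood of $s_0$ the zero set of $F$ is the graph of a continuously differentiable function $t=g(\sigma)$ or $\sigma=h(t)$, so the implicit locus is a $C^{1}$ curve there. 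Since the same neighbourhood contains the matching arc of the root locus by the set equality already established, the continuity of one locus transfers to the other.

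The principal obstacle lies with the exceptional points at which this argument must be patched: the finite zeros and poles of $W(s)$, the infinite points of $\mathbb{C}\cup\{\infty\}$, and above all the finite zeros of $W'(s)$. At a zero of $W'(s)$ both partial derivatives of $\varphi$ vanish by Theorem 2.7, so the implicit function theorem does not apply there, and the level set of $F$ (and hence the root locus) may bifurcate or self-intersect. I would handle these points separately by observing that zeros of $W'(s)$ are isolated, so that the locus is obtained by gluing $C^{1}$ arcs across a discrete singular set; continuity across the gluing follows from the limiting behaviour in Lemmas 1.11--1.13 and from the continuity of the gain $K$ supplied by Theorem 1.3. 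The most delicate technical step will be verifying that the branching pattern of the implicit locus at a zero of $W'(s)$ matches the branching pattern of the root locus, so that the two curves agree even where smoothness fails.
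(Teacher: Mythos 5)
Your first stage is, in substance, the paper's entire proof of this lemma: the paper simply invokes Theorem 1.7 to conclude that the $2q\pi+\alpha$ degree root locus and the solution set of $\varphi(\sigma,t)=2q\pi+\arg(b/a)$ are the same subset of the plane, and then observes that identical point sets trivially share every property, continuity included. You are actually more careful than the paper on one point: Theorem 1.7 is stated only for $s\in\Delta$, i.e.\ away from the zeros and poles of Eq(1.1), and the paper silently extends the identification to all of $\mathbb{C}\cup\{\infty\}$, whereas you explicitly adjoin the exceptional points via Lemmas 1.11, 1.12 and 1.16. (Note, though, that at a zero or pole $\varphi$ is not defined, so the ``implicit locus'' contains those points only by a closure convention that neither you nor the paper makes explicit.) Your second stage, however, is logically superfluous for this lemma: once the two loci are literally the same set, there is nothing to ``transfer'' --- no local $C^1$ structure, no implicit function theorem, and no analysis at the zeros of $W'(s)$ is needed to conclude that their continuity properties are equivalent. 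That machinery (Theorems 2.11--2.12, Remark 2.9, the graph representations $t=g(\sigma)$ and $\sigma=h(t)$, and the treatment of the singular set) is precisely the content of the subsequent Lemmas 2.14 and 2.15 and Theorem 2.16, which establish that the locus \emph{is} continuous; you have effectively front-loaded that later work into the proof of a statement that only asserts the two descriptions coincide.
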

\begin{proof}
When $2q\pi+\alpha$ is a constant, the root locus of the arbitrary $2q\pi+\alpha$  degree of Eq(1.1) are curves that are constituted by roots of Eq(1.1). Theorem 1.7 proves that the root locus of the arbitrary $2q\pi+\alpha$  degree of Eq(1.1) can also be sufficiently and necessary determined by the real function which contains two real variables, namely, when $2q\pi+\alpha$  is a constant, the roots locus of the arbitrary $2q\pi+\alpha$  degree of Eq(1.1) can also be sufficiently and necessary determined by the implicit function equation $ \varphi(\sigma,t)=2q\pi+arg{(b/a)}$.

So, essentially, Theorem 1.7 proves: if the extended complex plane $\mathbb{C}\cup\{\infty\}$ is as a two-dimensional plane which let two real variables $\sigma$, $t$ be as its two coordinates, then, the root locus of the arbitrary $2q\pi+\alpha$ degree of Eq(1.1) are constituted by points which are determined by two real variables $\sigma$, $t$  that satisfy the implicit function equation $\varphi(\sigma,t)=2q\pi+arg{(b/a)}$ in two-dimensional real plane.

Therefore, roots of Eq(1.1) in $\mathbb{C}\cup\{\infty\}$ and points which are determined by two real variables that satisfy the implicit function equation $\varphi(\sigma,t)=2q\pi+arg{(b/a)}$ in two-dimensional plane are same. Namely, if the extended complex plane $\mathbb{C}\cup\{\infty\}$ and two-dimensional real plane which are determined by two real coordinates are considered as same one, when the root locus of the arbitrary $2q\pi+\alpha$ degree of Eq(1.1) constitute the locus in $\mathbb{C}\cup\{\infty\}$, then, the points which are determined by two real coordinates that satisfy the implicit function equation $\varphi(\sigma,t)=2q\pi+arg{(b/a)}$ also constitute the locus in two-dimensional real plane, two kinds of locus are fully same.

Because two kinds of root locus are completely same, the properties like existence and continuity of the two kinds of  locus are equivalent.
\end{proof}

According to Lemma 2.13, the problems of the complex variable functions are transformed into the problems of the real variable functions. So, we can use theorems of the real variable functions and the definition of the continuity to prove the existence and continuity of points which are determined by two real variables that satisfy the implicit function equation $\varphi(\sigma,t)=2q\pi+arg{(b/a)}$ in two-dimensional real plane.

\begin{lemma}
Let $s_0=(\sigma_0+it_0)\in\Delta_d$, $s_0$ satisfies $ \varphi(\sigma_0,t_0)=2q\pi+arg{(b_0/a_0)}$. There exists an open set $U$ containing $\sigma_0$, an open set $V$ containing $t_0$, and a unique continuously differentiable function $g:U\longrightarrow V$ such that $((\sigma,g(\sigma))|\sigma\in U)$ = $((\sigma,t)\in U\times V|\varphi(\sigma,t)=2q\pi+arg{(b_0/a_0)})$. Or, there exists an open set $V$ containing $t_0$, an open set $U$ containing $\sigma_0$, and a unique continuously differentiable function $h:V\longrightarrow U$ such that $((h(t),t)|t\in V)$ = $((\sigma,t)\in U\times V|\varphi(\sigma,t)=2q\pi+arg{(b_0/a_0)})$.
\end{lemma}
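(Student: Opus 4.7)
The plan is to apply the implicit function theorems (Theorem 2.11 and Theorem 2.12) to the single equation
\[
F(\sigma,t) := \varphi(\sigma,t) - \bigl(2q\pi + \arg(b_0/a_0)\bigr) = 0
\]
viewed as a map $F : \mathbb{R}^{1+1} \to \mathbb{R}^{1}$, with the distinguished point $(a,b) = (\sigma_0,t_0)$. So here $n = m = 1$ and $c = 0$ in the notation of Theorems 2.11 and 2.12.

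First I would verify the two hypotheses of the implicit function theorem at $(\sigma_0,t_0)$. The equality $F(\sigma_0,t_0) = 0$ is exactly the standing assumption that $s_0$ satisfies the phase angle condition. Continuous differentiability of $F$ on a neighborhood of $(\sigma_0,t_0)$ follows from the explicit formula for $\varphi(\sigma,t)$ in Lemma 1.4 together with Theorem 2.4, since $s_0 \in \Delta_d$ means $s_0$ is neither a zero nor a pole of $W(s)$ nor an infinite point, so the denominator $u^2(\sigma,t)+v^2(\sigma,t)$ appearing in the partial derivative formulas is nonzero and smooth on a neighborhood of $s_0$.

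Next I would invoke the key non-degeneracy input, namely Theorem 2.8 and Remark 2.9: because $s_0 \in \Delta_d$ (so in particular $s_0$ is not a finite zero of $W'(s)$), the two partial derivatives $\partial\varphi/\partial\sigma$ and $\partial\varphi/\partial t$ cannot vanish simultaneously at $s_0$. Split into two cases accordingly. If $\partial\varphi/\partial t(\sigma_0,t_0) \neq 0$, then the $1\times 1$ Jacobian $J_{F,t}(\sigma_0,t_0) = [\partial F/\partial t(\sigma_0,t_0)]$ is invertible, so Theorem 2.11 (with the role of $y$ played by $t$) produces open sets $\sigma_0 \in U \subset \mathbb{R}$ and $t_0 \in V \subset \mathbb{R}$ and a unique continuously differentiable $g : U \to V$ whose graph coincides on $U\times V$ with the solution set of $\varphi(\sigma,t) = 2q\pi + \arg(b_0/a_0)$. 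Otherwise $\partial\varphi/\partial\sigma(\sigma_0,t_0) \neq 0$, and a symmetric application of Theorem 2.12 (with the role of $x$ played by $\sigma$) produces the corresponding $h : V \to U$ whose graph parametrizes the solution set. Either conclusion is exactly one of the two alternatives in the statement of Lemma 2.14.

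I do not expect a real obstacle here; the content of the lemma is essentially the implicit function theorem once one knows the regularity of $\varphi$ and the non-vanishing of its gradient. The only point that needs genuine care is confirming that the conditions defining $\Delta_d$ are precisely what is required to rule out every way the implicit function theorem could fail: excluding zeros and poles of $W(s)$ ensures $\varphi$ is $C^{1}$ near $s_0$, excluding infinite points of $\mathbb{C}\cup\{\infty\}$ ensures we are in a genuine $\mathbb{R}^{2}$-chart, and excluding finite zeros of $W'(s)$ provides the non-degeneracy of the gradient via Theorem 2.8. With those three exclusions in hand, the two cases above cover all possibilities and produce the required $g$ or $h$.
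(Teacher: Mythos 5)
Your proposal is correct and follows essentially the same route as the paper's own proof: establish that $\varphi$ is continuously differentiable near $s_0$, use Theorem 2.8 (Remark 2.9) to conclude that the two partial derivatives of $\varphi$ cannot vanish simultaneously because $s_0\in\Delta_d$, and then apply Theorem 2.11 or Theorem 2.12 in the respective cases to obtain $g$ or $h$. Your additional remarks justifying the $C^1$ regularity via the nonvanishing of $u^2(\sigma,t)+v^2(\sigma,t)$ are slightly more explicit than the paper, but the argument is the same.
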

\begin{proof}
The function $\varphi(\sigma,t)$, $\varphi(\sigma,t):R^2\longrightarrow R$ is a continuously differentiable function, $R^2$ have two coordinates $(\sigma,t)$.

For an arbitrary point $s_0=(\sigma_0+it_0)\in\Delta$, it satisfies the implicit function equation $ \varphi(\sigma_0,t_0)=2q\pi+arg{(b_0/a_0)}$, where $arg{(b_0/a_0)}\in R$.

The point $s_0=(\sigma_0+it_0)\in\Delta$ satisfies the condition of Theorem 2.11, according to Theorem 2.8, we can obtain: if $\frac{\partial{\varphi}}{\partial{\sigma}}(\sigma,t)\mid{_{s=s_0}}=0$,
we have $\frac{\partial{\varphi}}{\partial{t}}(\sigma,t)\mid{_{s=s_0}}\neq 0$;
or, if $\frac{\partial{\varphi}}{\partial{t}}(\sigma,t)\mid{_{s=s_0}}=0$, we have
$\frac{\partial{\varphi}}{\partial{\sigma}}(\sigma,t)\mid{_{s=s_0}}\neq0$.

The implicit function $ \varphi(\sigma,t)=2q\pi+arg{(b/a)}$ satisfies the implicit function theorem, so, according to the implicit function theorem concerning variable $x$, we can obtain: If $\frac{\partial{\varphi}}{\partial{t}}(\sigma,t)\mid{_{s=s_0}}\neq 0$. There exists an open set $U$ containing $\sigma_0$, an open set $V$ containing $t_0$, and a unique continuously differentiable function $g:U\longrightarrow V$ such that $((\sigma,g(\sigma))|\sigma\in U)$ = $((\sigma,t)\in U\times V|\varphi(\sigma,t)=2q\pi+arg{(b_0/a_0)})$.

According to the implicit function theorem concerning variable $y$. If $\frac{\partial{\varphi}}{\partial{\sigma}}(\sigma,t)\mid{_{s=s_0}}\neq0$. There exists an open set $V$ containing $t_0$, an open set $U$ containing $\sigma_0$, and a unique continuously differentiable function $h:V\longrightarrow U$ such that $((h(t),t)|t\in V)$ = $((\sigma,t)\in U\times V|\varphi(\sigma,t)=2q\pi+arg{(b_0/a_0)})$.
\end{proof}

\begin{lemma}
Let $s_0=(\sigma_0+it_0)\in\Delta_d$, then the root locus of arbitrary $2q\pi+\alpha$ degree of Eq(1.1) are all continuous on the points $s_0$.
\end{lemma}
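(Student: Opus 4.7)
The plan is to reduce the continuity question to a local parameterization of the implicit level set $\{\varphi(\sigma,t) = 2q\pi+\arg(b_0/a_0)\}$ at $s_0$, and then quote the smooth implicit function built in Lemma 2.14.

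First, I would invoke Lemma 2.13 to replace the root locus of Eq(1.1) with the real level curve of the phase-angle equation $\varphi(\sigma,t) = 2q\pi+\arg(b/a)$ in a neighborhood of $s_0$. Lemma 2.13 asserts that the two loci coincide point-by-point and that properties such as existence and continuity transfer between them. Hence it suffices to prove local continuity of the real curve $\varphi(\sigma,t) = 2q\pi+\arg(b_0/a_0)$ at the point $(\sigma_0,t_0)$.

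Second, I would apply Lemma 2.14 directly at $s_0$. Since $s_0 \in \Delta_d$, Theorem 2.8 together with Remark 2.9 ensures that the two partial derivatives $\partial\varphi/\partial\sigma$ and $\partial\varphi/\partial t$ do not both vanish at $s_0$; hence one of the two cases of Lemma 2.14 applies. If $\partial\varphi/\partial t|_{s_0} \ne 0$, Lemma 2.14 produces open neighborhoods $U \ni \sigma_0$ and $V \ni t_0$ and a continuously differentiable $g : U \to V$ with $g(\sigma_0) = t_0$ whose graph coincides with the level set in $U \times V$. If instead $\partial\varphi/\partial\sigma|_{s_0} \ne 0$, Lemma 2.14 yields a continuously differentiable $h : V \to U$ with $h(t_0) = \sigma_0$ whose graph coincides with the level set in $U \times V$. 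In either case the level set is locally the graph of a continuously differentiable, hence continuous, function.

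Third, continuity of $g$ (or $h$) on an open neighborhood of $\sigma_0$ (or $t_0$) immediately implies continuity of the level curve at $(\sigma_0,t_0)$: any sequence of parameter values converging to $\sigma_0$ produces points on the curve converging to $(\sigma_0,t_0)$, and the graph representation rules out any other accumulation behavior within $U \times V$. Transferring this back through Lemma 2.13 gives continuity of the root locus of arbitrary $2q\pi+\alpha$ degree of Eq(1.1) at $s_0$.

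Since the implicit function machinery and the degeneracy check on partials are already packaged in the preceding lemmas, the main obstacle is not analytical but bookkeeping: one must verify that the two cases of Lemma 2.14 jointly cover every $s_0 \in \Delta_d$, which is exactly what Remark 2.9 guarantees. Thus the proof is essentially a direct combination of Lemma 2.13, Remark 2.9, and Lemma 2.14.
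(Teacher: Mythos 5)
Your proposal is correct and follows essentially the same route as the paper: both reduce the claim to the local graph representation $t=g(\sigma)$ or $\sigma=h(t)$ supplied by Lemma 2.14 (with the nondegeneracy of the partials coming from Theorem 2.8 / Remark 2.9) and then conclude continuity of the locus from continuity of $g$ or $h$. The only difference is cosmetic: the paper writes out the explicit $\varepsilon$--$\delta$ estimate $\sqrt{(\sigma-\sigma_0)^2+(g(\sigma)-g(\sigma_0))^2}<\varepsilon$ where you simply assert that continuity of the graph function gives continuity of the curve.
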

\begin{proof}
According to the results of Lemma 2.14, we have: $t=g(\sigma)$ is a continuous function inside an open set $U$. So, for any given $\varepsilon>0$, there surely exists a $\delta_{\sigma1}<\frac{\varepsilon}{\sqrt(2)}$, when $|\sigma-\sigma_0|<\delta_{\sigma1}<\frac{\varepsilon}{\sqrt(2)}$. Then, $| t-t_0 |=|g(\sigma)-g(\sigma_0) |<\frac{\varepsilon}{\sqrt(2)}$ is true. So, for any given $\varepsilon>0$, let $\delta_{\sigma2}=\delta_{\sigma1}<\frac{\varepsilon}{\sqrt(2)}$, when $|\sigma-\sigma_0|<\delta_{\sigma1}<\frac{\varepsilon}{\sqrt(2)}$, then,    $| t-t_0 |=|g(\sigma)-g(\sigma_0) |<\frac{\varepsilon}{\sqrt(2)}$, and obtain: $|\sigma+it-\sigma_0-it_0 |=|\sigma+ig(\sigma)-\sigma_0-ig(\sigma_0) |=\sqrt{(\sigma-\sigma_0)^2+(g(\sigma)-g(\sigma_0))^2}<\sqrt{\frac{\varepsilon^2}{2}+\frac{\varepsilon^2}{2}}=\varepsilon$.

According to Lemma 2.14, we have: $\sigma=h(t)$ is a continuous function inside an open set $V$. So, for any given $\varepsilon>0$, there surely exists a $\delta_{\sigma1}<\frac{\varepsilon}{\sqrt(2)}$, when $|t-t_0|<\delta_{\sigma1}<\frac{\varepsilon}{\sqrt(2)}$, then, $| \sigma-\sigma_0 |=|h(t)-h(t_0) |<\frac{\varepsilon}{\sqrt(2)}$  is true. So, for any given $\varepsilon>0$, let $\delta_{\sigma2}=\delta_{\sigma1}<\frac{\varepsilon}{\sqrt(2)}$, when $|t-t_0|<\delta_{\sigma1}<\frac{\varepsilon}{\sqrt(2)}$, then, $| \sigma-\sigma_0 |=|h(t)-h(t_0) |<\frac{\varepsilon}{\sqrt(2)}$, and obtain:  $|\sigma+it-\sigma_0-it_0 |=|h(t)+it-h(t_0)-it_0|=\sqrt{(t-t_0)^2+(h(t)-h(t_0))^2}<\sqrt{\frac{\varepsilon^2}{2}+\frac{\varepsilon^2}{2}}=\varepsilon$

So, it is proved that the root locus of the arbitrary $2q\pi+\alpha$ degree phase angle of Eq(1.1) are continuous on the point
$(\sigma_0, t_0)$.

Because the point $(\sigma_0, t_0)$ is an arbitrary point except zeros and poles of Eq(1.1), except the infinite points in $\mathbb{C}\cup\{\infty\}$ and except the derivative zeros of the function $W(s)$ in $\mathbb{C}\cup\{\infty\}$. Based on that the root locus of the arbitrary $2q\pi+\alpha$ degree of Eq(1.1) are continuous on the point $(\sigma_0, t_0)$, so, it is proved that the root locus of the arbitrary $2q\pi+\alpha$ degree of Eq(1.1) can not have any break points except the derivative zeros of the function $W(s)$ and zeros and poles of Eq(1.1). When $s_0=(\sigma_0+it_0)\in\Delta_d$, the root locus of arbitrary $2q\pi+\alpha$ degree phase angle of Eq(1.1) must be continuous on the point $s_0$.
\end{proof}

Because of the limition of the implicit function theorem, the zeros, poles and the finite zeros of derivative of the function $W(s)$ are excluded in the result of the continuity of the root locus. The root locus are the results in  $\mathbb{C}\cup\{\infty\}$. But, the proof of the continuity of the root locus are achieved through utilizing the phase angle function which has two real variables and real function values. In the previous results, except three categories points which discretely distributed in  $\mathbb{C}\cup\{\infty\}$, every one root loci is continuous. We have used the definition of the continuity to prove Lemma 2.15. Because the root locus are all continuous around the three categories points, obviously, we can use the continuity judging theorems about the real value function of the real variables to prove that the root locus are surely continuous on the finite zeros of derivative of the function $W(s)$. On the zeros and poles of Eq(1.1), the root locus are surely the left continuous or the right continuous. The proof method is the same as the proof of Lemma 2.15. Here, we needn't to give their proofs. And we directly give their results.

The situation of the infinite points is more complex, here, we don't give the proof of its result. Using the same method which has already been given in the above segment, we can prove this problem. We only give the result which the root locus are continuous on the infinite points in $\mathbb{C}\cup\{\infty\}$.

\begin{theorem}
Let $s=(\sigma+it)\in\mathbb{C}\cup\{\infty\}$, then, the root locus of arbitrary $2q\pi+\alpha$ degree of Eq(1.1) are all continuous on the points $s$ in $\mathbb{C}\cup\{\infty\}$.
\end{theorem}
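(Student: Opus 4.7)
My plan is to extend Lemma 2.15, which handles points in $\Delta_d$, by treating separately each category of point excluded there: (i) finite zeros of $W'(s)$, (ii) zeros and poles of Eq.(1.1), and (iii) the infinite points of $\mathbb{C}\cup\{\infty\}$. The strategy throughout is to reduce continuity of the root locus at such a point $s_0$ to continuity in the real-variable sense of the implicit relation $\varphi(\sigma,t)=2q\pi+\arg(b/a)$ described in Lemma 2.13, and then to import an appropriate continuity criterion from real analysis once the implicit function theorem itself is no longer directly applicable.

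For case (i), I would use that the finite zeros of $W'(s)$ form a discrete subset of $\mathbb{C}$. On a punctured neighbourhood of such a point $s_0$, the point lies in $\Delta_d$, so by Lemma 2.15 the $2q\pi+\alpha$ degree locus is continuous there. Since $\varphi(\sigma,t)$ is itself continuous at $s_0$ (the partial derivatives only fail to be simultaneously nonzero; $\varphi$ does not blow up), I would invoke the standard real-analytic criterion that a curve defined by a continuous level equation, known to be continuous on a punctured neighbourhood of an isolated critical point, extends continuously at the critical point itself by taking one-sided limits along each branch meeting $s_0$. This gives full continuity at the finite zeros of $W'(s)$.

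For case (ii), the zeros and poles of Eq.(1.1), Lemmas 1.11 and 1.12 say that every such point lies simultaneously on the root loci of a whole interval of degrees (from $2q\pi$ to $2q\pi+2\gamma_l\pi$ or $2q\pi+2\beta_j\pi$). So a root locus of fixed degree $2q\pi+\alpha$ arrives at such a point as an endpoint, and continuity there becomes one-sided. I would copy the $\varepsilon$--$\delta$ argument of Lemma 2.15 verbatim, but restricted to the side on which the implicit function $t=g(\sigma)$ (or $\sigma=h(t)$) is defined; the gain $K$ tends monotonically to $0$ or $+\infty$ by Lemma 1.2, which ensures the approach is well-defined.

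For case (iii), the infinite points, I would pull back via the coordinate change $w=1/s$, which sends $\infty$ to $0$ and turns the meromorphic $W(s)$ into a meromorphic function $\widetilde W(w)$ of the same structural form near $w=0$. The three sub-cases of Lemma 1.16 then reduce exactly to cases (i) or (ii) already handled, applied to $\widetilde W$ at $w=0$. Pushing the local continuity at $w=0$ back through the homeomorphism $w\mapsto 1/w$ on $\mathbb{C}\cup\{\infty\}$ gives continuity of the original locus at infinity. Together with Lemma 2.15 and cases (i)--(ii), this exhausts $\mathbb{C}\cup\{\infty\}$.

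The main obstacle I expect is case (i): at a finite zero of $W'(s)$, several branches of the root locus may meet, possibly with cusps or self-crossings, so the ``one-sided limits along each branch'' step needs a careful local count of branches and a justification that each branch does extend continuously to $s_0$. I would address this by combining the already-proved continuity on the punctured neighbourhood with the fact that $\varphi$ is $C^1$ at $s_0$, so the zero set of $\varphi-(2q\pi+\alpha)$ is locally a finite union of continuous arcs meeting at $s_0$; then continuity of the locus at $s_0$ follows arc by arc, exactly as in Lemma 2.15.
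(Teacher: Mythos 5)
Your three-case decomposition (finite zeros of $W'(s)$, zeros and poles of Eq.\ (1.1), and the infinite points) is exactly the decomposition the paper gestures at in the two paragraphs preceding Theorem 2.16 --- but be aware that the paper never actually carries it out: it asserts that the first two cases follow ``by the same method'' as Lemma 2.15 and explicitly declines to prove the case of the infinite points. So your proposal is not an alternative route; it is an attempt to supply the proof the paper omits, and it is already more substantive than the source.

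There is, however, a genuine gap in your case (i), which also infects case (ii). You infer that because $\varphi$ is $C^{1}$ at a finite zero $s_0$ of $W'(s)$, the level set of $\varphi-(2q\pi+\alpha)$ is locally a finite union of continuous arcs meeting at $s_0$. That implication is false for general $C^{1}$ (even $C^{\infty}$) functions with vanishing gradient at $s_0$: the level set through such a point can be two-dimensional or an infinite bouquet of curves, so ``continuity arc by arc'' is not yet meaningful. What rescues the argument is holomorphy, not smoothness: writing $W(s)=W(s_0)\bigl(1+c(s-s_0)^{m}(1+O(s-s_0))\bigr)$ near a finite critical point with $W(s_0)\neq 0,\infty$, the set $\{\arg W=\arg W(s_0)\}$ is locally $2m$ analytic arcs ending at $s_0$, each continuous up to $s_0$; similarly, near a zero or pole of order $k$ the set $\{\arg W=\alpha\}$ is locally $k$ arcs terminating at $s_0$. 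You should replace the appeal to $C^{1}$-ness by this local normal form; with that repair cases (i) and (ii) close, and your inversion $w=1/s$ in case (iii) correctly reduces the point at infinity to the finite cases (noting that when the numerator and denominator degrees differ, $\infty$ is itself a zero or pole of $W$, i.e.\ case (ii) after inversion). A final caveat worth stating explicitly: at all three kinds of exceptional points the locus genuinely branches, so the theorem's unqualified claim of ``continuity at $s$'' is only meaningful in the per-arc (one-sided) sense that you, unlike the paper, actually articulate.
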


\section{The results about the mean value problem}

Specially, the meromorphic function $W(s)$ in Eq(1.1) can be a rational function.
$W_{r}(s)=\frac{\prod^{m}_{l=1}(s-z_{l})}{\prod^{n}_{j=1}(s-p_{j})}$. In this paper, let $n\ge m$.
$n$ and $m$ are both natural number.

\begin{lemma}
On every one root loci of Eq(3.1), there don't exist two points which their gain values are same.
\begin{equation}
KW_{r}(s)=a_f+ib_f
\end{equation}
\end{lemma}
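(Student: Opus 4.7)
The plan is to argue by contradiction. Suppose that two distinct points $s_1$, $s_2$ lie on the same root locus of Eq(3.1) and share the gain $K(s_1)=K(s_2)=K_0$. Substituting each into Eq(3.1) gives $K_0 W_r(s_1) = a_f+ib_f = K_0 W_r(s_2)$, hence $W_r(s_1)=W_r(s_2)$. Together with the fact that $\arg W_r(s_1)=\arg W_r(s_2)$ automatically holds on a single root locus (Theorem 1.7), the task reduces to ruling out the coincidence $W_r(s_1)=W_r(s_2)$ at two distinct points of the same locus.

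The main step is a monotonicity argument. By Theorem 1.17 each root locus of Eq(3.1) runs from an originating pole (where $K=0$) to a receiving zero (where $K=+\infty$), and by Theorem 2.16 it is a continuous curve which I parameterize regularly as $s(l)$. At any non-critical point of $W_r$ the complex tangent $ds/dl$ along the locus is perpendicular to $\nabla\varphi$, and using Theorem 2.5 one checks that it is proportional to $\overline{W_r'(s)/W_r(s)}$. Consequently
\[
\frac{d}{dl}\log W_r(s(l)) \;=\; \frac{W_r'(s(l))}{W_r(s(l))}\cdot\frac{ds}{dl} \;\propto\; \abs{\frac{W_r'(s(l))}{W_r(s(l))}}^{2},
\]
a strictly positive real number. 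Hence $\log\abs{W_r(s(l))}$, and therefore the gain $K(s)=\abs{a_f+ib_f}/\abs{W_r(s)}$, is strictly monotonic along every stretch of the locus that is free of critical points of $W_r$. Equivalently, applying Rolle's theorem to $\abs{W_r(s(l))}$ on an arc joining $s_1$ to $s_2$ shows that $|W_r|$ has the same endpoint values, so some interior $l^{*}$ satisfies $W_r'(s(l^{*}))s'(l^{*})=0$, forcing $s(l^{*})$ to be a critical point of $W_r$.

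The main obstacle is the possibility that the locus passes through zeros of $W_r'$, where several branches of $\{\arg W_r = 2q\pi+\alpha\}$ cross and the gradient/Rolle argument breaks down. I would dispose of this by exploiting the parameter-path description of a single root locus as the image of a continuous solution $s(K)$ of Eq(3.1): since $W_r(s(K)) = (a_f+ib_f)/K$ takes a distinct value for every $K>0$, the map $K\mapsto s(K)$ cannot revisit a point, even across an isolated critical point, so any two distinct points on the locus must correspond to distinct gain values. This yields the desired contradiction with the assumed equality $K(s_1)=K(s_2)$ and completes the proof.
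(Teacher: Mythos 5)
Your approach is genuinely different from the paper's: you argue locally, via the fact that along an arc of the level set $\{\arg W_r=\mathrm{const}\}$ free of zeros of $W_r'$ the derivative of $\log W_r$ along the arc is proportional to $\abs{W_r'/W_r}^2$, so $\log\abs{W_r}$ (hence the gain) is strictly monotonic there, and then you try to patch across critical points. The paper instead argues globally by root counting: if one branch carried two points of gain $1/\abs{FF}$, then since the gain on each of the other $n-1$ branches runs continuously from $0$ to $+\infty$, each of those branches also carries at least one such point, giving more than $n$ solutions of $W_r(s)=FF$, i.e.\ more than $n$ roots of the degree-$n$ polynomial $FF\prod_{j}(s-p_j)-\prod_{l}(s-z_l)$. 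Your first (monotonicity/Rolle) step is sound and correctly reduces the problem to the case where the locus meets a zero of $W_r'$.

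However, your disposal of that remaining case has a genuine gap: it is circular. You invoke ``the parameter-path description of a single root locus as the image of a continuous solution $s(K)$ of Eq(3.1)'' and conclude that distinct points must have distinct gains. But the existence of a global, continuous, \emph{surjective} parameterization of the branch by the gain $K\in(0,+\infty)$ is precisely what is at issue: it is essentially the content of Theorem 3.2, which the paper derives \emph{from} this lemma. At a zero of $W_r'$ the level set $\{\arg W_r=\mathrm{const}\}$ has a crossing with $2(k+1)$ local arcs, on which $\log\abs{W_r}$ alternately increases and decreases away from the critical point; a priori the branch could enter along a decreasing arc and leave along another decreasing arc, after which the modulus retraces values it has already taken, producing exactly the pair of equal-gain points you need to exclude. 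Your injectivity observation ($s(K_1)=s(K_2)\Rightarrow K_1=K_2$) goes in the wrong direction and does not rule this out unless you have already shown that every point of the branch is of the form $s(K)$ for a single continuous selection $s(\cdot)$ through the crossing. Some global input is needed here --- the paper's counting of the roots of $W_r(s)=FF$ against the degree $n$ is precisely the ingredient that your local argument lacks.
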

\begin{proof}
Because Eq(3.1) has $n$ poles. Every one pole only emits a root loci of Eq(3.1) which its degree is $\arg(b_f/a_f)$. Eq(3.1) emits $n$ root locus which their degrees are $\arg(b_f/a_f)$. According to Theorem 1.3, for every one root loci of $n$ root locus which their degrees are $\arg(b_f/a_f)$, the gains of points on these root locus obtain gain values from 0 on poles to the positive infinity on zeros. So, if there are two points which their gain values are both $\frac{1}{\abs{FF}}$ on a same root loci of Eq(3.1), then, on every one root loci of other $n-1$ root locus, there at least exists one point which its gain value is $\frac{1}{\abs{FF}}$. So, there are more than $n$ points which their phase angles are all $\arg(b_f/a_f)$. and gain values are all $\frac{1}{\abs{FF}}$.

Namely, there are more than $n$ points which they satisfy the equation: $\frac{1}{\abs{FF}}W_{r}(s)=a_f+ib_f$. Further, $W_{r}(s)=\abs{FF}(a_f+ib_f)$. Let $FF=\abs{FF}(a_f+ib_f)$. $FF$ is a complex number. So, $W_{r}(s)=FF$. $FF\prod^{n}_{j=1}(s-p_{j})-\prod^{m}_{l=1}(s-z_{l})=0$. It is $n$ order polynomial equation. It has and only has $n$ roots. So, on every one root loci of Eq(3.1), there exist two points which their gain values can be same, this situation cannot be true.
\end{proof}

\begin{theorem}
On every root loci of Eq(3.1), the gain of the root loci continuously and strictly monotonically obtains values from 0 on poles of Eq(3.1) to the positive infinity on zeros of Eq(3.1).
\end{theorem}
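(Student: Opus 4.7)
The plan is to combine continuity of the gain along the locus (from Theorem 2.16 and continuity of $|W_r(s)|$) with the injectivity statement of Lemma 3.1, and then invoke the classical fact that a continuous injection on a connected interval is strictly monotonic.

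First I would fix an arbitrary root locus branch of Eq(3.1) of degree $2q\pi+\arg(b_f/a_f)$. By Theorem 1.17 this branch is a curve originating at some pole $p_{j_0}$ of $W_r$ and terminating at some zero $z_{l_0}$ (finite or infinite). Using Theorem 2.16 the branch is continuous in $\mathbb{C}\cup\{\infty\}$, so I can parametrize it by a continuous map $\gamma:[0,1]\to\mathbb{C}\cup\{\infty\}$ with $\gamma(0)=p_{j_0}$ and $\gamma(1)=z_{l_0}$. Because $K(s)=1/|W_r(s)|$ depends continuously on $s$ in $\mathbb{C}\cup\{\infty\}$ (continuity at infinity being understood in the extended-real sense since $W_r$ is rational), the composition $\kappa:=K\circ\gamma:[0,1]\to[0,+\infty]$ is continuous. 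Lemma 1.2 together with its analogue at infinity furnished by Lemma 1.16 gives the endpoint values $\kappa(0)=0$ and $\kappa(1)=+\infty$, and Theorem 1.3 guarantees that $\kappa$ sweeps through every non-negative value.

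Next I would use Lemma 3.1, which asserts that no two points of a single root locus branch share the same gain value. Translated through the parametrization, this says that $\kappa:[0,1]\to[0,+\infty]$ is injective. The main technical step is now the standard real-analysis fact: a continuous injection from a (compact) interval into the extended reals is strictly monotonic. Combined with the boundary conditions $\kappa(0)=0<+\infty=\kappa(1)$, this forces $\kappa$ to be strictly increasing, which is exactly the statement of the theorem.

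The only delicate point I expect is handling the endpoints and any infinite points on the branch carefully: at a pole the value $K=0$ is attained in the limit, and likewise $K=+\infty$ at a zero, so the continuity of $\kappa$ there has to be interpreted in the extended real line $[0,+\infty]$. This is routine once one notes that $|W_r(s)|$ is a continuous function from $\mathbb{C}\cup\{\infty\}$ to $[0,+\infty]$, so I would simply state this and cite Theorem 2.16 for the continuity of the locus itself. Everything else, including the monotonicity conclusion, is a direct consequence of the injectivity from Lemma 3.1 and the continuity of $\kappa$.
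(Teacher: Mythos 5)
Your proposal is correct relative to the paper's framework and takes essentially the same route as the paper: the paper's proof is the contrapositive of your argument (non-monotonicity would produce two points with equal gain, contradicting Lemma 3.1), followed by an appeal to Theorem 1.3 for the range of values. You merely make explicit the continuity ingredients (Theorem 2.16, the parametrization, the extended-real-valued continuity of $K=1/|W_r(s)|$) and the standard fact that a continuous injection on an interval is strictly monotone, all of which the paper leaves implicit.
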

\begin{proof}
If the gain $|K(s)|$ can not be monotonic on the root loci  of Eq(3.1), then, on the root loci , there surely exist two points $s_1$, $s_2$. $|K(s_1)|=|K(s_2)|$. It is contradict to Lemma 3.1. According to Theorem 1.3, we can obtain Theorem 3.2.
\end{proof}

For the inequality in mean value problem
\begin{equation}
\frac{\abs{f(s)-f(\theta)}}{\abs{s-\theta}}\le \abs{f^{'}(s)}.
\end{equation}
Let $f(s)=\sum^{n}_{k=0}(a_{k}s^{k})$. $f(\theta)=\sum^{n}_{k=0}(a_{k}{\theta}^{k})$. $f(s)-f(\theta)=\sum^{n}_{k=0}(a_{k}s^{k})-\sum^{n}_{k=0}(a_{k}{\theta}^{k})=\sum^{n}_{k=1}(a_{k}(s^{k}-{\theta}^{k}))
=\sum^{n}_{k=1}(a_{k}(s-\theta)\sum^{k}_{jj=1}s^{jj-1}{\theta}^{k-jj})$.
$\frac{\abs{f(s)-f(\theta)}}{\abs{s-\theta}}=\abs{\frac{f(s)-f(\theta)}{s-\theta}}=
\abs{\sum^{n}_{k=1}a_{k}\sum^{k}_{jj=1}s^{jj-1}{\theta}^{k-jj}}$.

Namely, $\frac{f(s)-f(\theta)}{s-\theta}$ is a polynomial of one variable which the order is $n-1$. $f^{'}(s)$ is the derivative polynomial of the polynomial $f(s)$. Eq(3.2) can be equivalently transformed into the inequality
\begin{equation}
\abs{\frac{f^{'}(s)}{\frac{f(s)-f(\theta)}{s-\theta}}}\ge 1.
\end{equation}

The left side $\frac{f^{'}(s)}{\frac{f(s)-f(\theta)}{s-\theta}}$ of Eq(3.3) is a rational fraction which the polynomial $f^{'}(s)$ is divided by the polynomial $\frac{f(s)-f(\theta)}{s-\theta}$. The zeros of Eq(3.3) are zeros of the polynomial $f^{'}(s)$, and  the zeros of Eq(3.3) are zeros of derivative of the polynomial $f(s)$. The poles of of Eq(3.3) are zeros of the polynomial $\frac{f(s)-f(\theta)}{s-\theta}=$$\sum^{n}_{k=1}(a_{k}\sum^{k}_{jj=1}s^{jj-1}{\theta}^{k-jj})$. Do the root locus equation:
\begin{equation}
K\frac{f^{'}(s)}{\sum^{n}_{k=1}(a_{k}\sum^{k}_{jj=1} s^{jj-1}{\theta}^{k-jj})}=a+ib.
\end{equation}

According to the results of the root locus in section 1 and section 2, we can obtain: The root locus of Eq(3.4) begin at the zeros of the polynomial $\sum^{n}_{k=1}(a_{k}\sum^{k}_{jj=1} s^{jj-1}{\theta}^{k-jj})$. And they end at the zeros of $f^{'}(s)$. From the zeros of the polynomial $\sum^{n}_{k=1}(a_{k}\sum^{k}_{jj=1} s^{jj-1}{\theta}^{k-jj})$ to the zeros of $f^{'}(s)$, the gain $K=\abs{\frac{\sum^{n}_{k=1}(a_{k}\sum^{k}_{jj=1} s^{jj-1}{\theta}^{k-jj})}{f^{'}(s)}}$ strictly and monotonically increase. The gain $K$ and the modulus of the rational fraction $\frac{f^{'}(s)}{\sum^{n}_{k=1}(a_{k}\sum^{k}_{jj=1} s^{jj-1}{\theta}^{k-jj})}$ are the relationship of reciprocal.  From the zeros of the polynomial $\sum^{n}_{k=1}(a_{k}\sum^{k}_{jj=1} s^{jj-1}{\theta}^{k-jj})$ to the zeros of $f^{'}(s)$, the modulus of the  rational fraction $\abs{\frac{f^{'}(s)}{\sum^{n}_{k=1}(a_{k}\sum^{k}_{jj=1} s^{jj-1}{\theta}^{k-jj})}}$ strictly and monotonically decrease.

At every zero $\theta_i$ of $f^{'}(s)$, $f^{'}(\theta_i)=0$. So, around every zero of $f^{'}(s)$, there surely exists its adjacent domain $\Omega_i$,
$i=1,2,\cdots, n-1$. For all points in the adjacent domain of zero $\theta_i$ of $f^{'}(s)$, they all satisfy: $\abs{\frac{f^{'}(s)}{\frac{f(s)-f(\theta_i)}{s-\theta_i}}}<1$, namely,
$\frac{\abs{f(s)-f(\theta_i)}}{\abs{s-\theta_i}}=\abs{\frac{f(s)-f(\theta_i)}{s-\theta_i}}>\abs{f^{'}(s)}$.

But, for all points in the extended complex plane except all adjacent domains $\Omega={\Omega_i}$, $i=1,2,\cdots n-1$,
they all satisfy: $\abs{\frac{f^{'}(s)}{\frac{f(s)-f(\theta_i)}{s-\theta_i}}}\ge 1$, namely,
$\frac{\abs{f(s)-f(\theta_i)}}{\abs{s-\theta_i}}=\abs{\frac{f(s)-f(\theta_i)}{s-\theta_i}}\le \abs{f^{'}(s)}$.

So, we have proved the following two theorems.

\begin{theorem}
Given a complex polynomial $f$ of degree $d\geq 2$ and a complex number $s$.
For every critical point $\theta_i$ (ie, $f^{'}(\theta_i)=0$), $i=1,2,\cdots, n-1$, there exists its adjacent domain $\Omega_i$.
For all points in the adjacent domain $\Omega=\{\Omega_i\}$, $i=1,2,\cdots, n-1$, such that $\frac{\abs{f(s)-f(\theta_i)}}{\abs{s-\theta_i}}> \abs{f^{'}(s)}$.
\end{theorem}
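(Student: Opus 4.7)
The plan is to recast the stated inequality as a statement about the modulus of the rational function
$$R_i(s) = \frac{f'(s)}{\frac{f(s)-f(\theta_i)}{s-\theta_i}},$$
and then use the root-locus machinery of Sections 1--2 to produce a neighborhood $\Omega_i$ of $\theta_i$ on which $|R_i(s)|<1$. Since the stated inequality is equivalent to $|R_i(s)|<1$, the conclusion follows immediately from the existence of such a neighborhood.

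First I would identify the zeros of $R_i$ as the critical points of $f$ (the zeros of $f'$), including $\theta_i$ itself, while the poles of $R_i$ are the zeros of the polynomial $\sum_{k=1}^{n} a_k \sum_{jj=1}^{k} s^{jj-1}\theta_i^{\,k-jj}$. Second, I would form the root locus equation (3.4) of arbitrary degree $\alpha = \arg(b/a)$ and invoke Theorem 1.17 so that each root locus runs from a pole to a zero of $R_i$. Combining with Theorem 3.2, the gain $K = 1/|R_i(s)|$ increases strictly and monotonically from $0$ to $+\infty$ along every such locus; equivalently, $|R_i(s)|$ decreases strictly from $+\infty$ down to $0$ along each branch terminating at $\theta_i$. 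On each such branch the set $\{|R_i|<1\}$ is then a nontrivial initial arc. Third, I would sweep $\alpha$ continuously over $[0,2\pi)$: by the continuity of root loci in their starting angle (Theorem 2.16) these initial arcs vary continuously in $\alpha$, and their union covers every direction of approach to $\theta_i$. Finally I would define $\Omega_i$ as the connected component of $\{s \in \mathbb{C}\cup\{\infty\} : |R_i(s)|<1\}$ whose closure contains $\theta_i$, and translate $|R_i|<1$ back to the inequality of the theorem.

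The step I expect to be the main obstacle is the consistency of the root-locus picture at $\theta_i$ itself. Because $f'(\theta_i)=0$, direct substitution shows that the denominator polynomial of $R_i$ also vanishes at $s=\theta_i$, so $\theta_i$ is simultaneously a zero of the numerator and of the denominator; Lemma~1.2 requires such coincident factors to be cancelled before the origination/reception conclusions apply. What remains after cancellation --- whether $\theta_i$ is still a zero, becomes a regular point with a nonzero finite limiting value, or turns into a pole --- depends on the comparison of the multiplicity of $\theta_i$ as a zero of $f'$ and as a zero of $\frac{f(s)-f(\theta_i)}{s-\theta_i}$. Establishing that after cancellation $\theta_i$ remains a zero of the reduced rational function, and that the strict monotonicity of $|R_i|$ persists down through values less than $1$ on a genuine two-dimensional wedge around $\theta_i$ rather than only along one-dimensional arcs, is the technical crux; without this, the angular sweep of the preceding paragraph cannot be closed up into an open planar $\Omega_i$, and the reversed inequality of Theorem 3.3 cannot be concluded.
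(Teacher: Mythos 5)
You have reproduced the paper's own strategy --- recasting the inequality as $|R_i(s)|<1$ for $R_i(s)=f'(s)\big/\frac{f(s)-f(\theta_i)}{s-\theta_i}$ and appealing to the monotonicity of the gain along the root loci of Eq.\ (3.4) --- and the obstacle you single out at the end is exactly the right one. But it is not a technical point that a more careful argument could patch: carrying out the cancellation shows the step fails and, with it, the theorem. If $\theta_i$ is a zero of $f'$ of multiplicity $m\ge 1$, then $f(s)-f(\theta_i)=\frac{f^{(m+1)}(\theta_i)}{(m+1)!}(s-\theta_i)^{m+1}(1+o(1))$, so the denominator $\frac{f(s)-f(\theta_i)}{s-\theta_i}$ has a zero of order exactly $m$ at $\theta_i$ with leading coefficient $\frac{f^{(m+1)}(\theta_i)}{(m+1)!}$, while the numerator $f'(s)$ has a zero of the same order $m$ with leading coefficient $\frac{f^{(m+1)}(\theta_i)}{m!}$. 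Hence after cancellation $\theta_i$ is not a zero of the reduced rational function at all: $R_i(s)\to m+1\ge 2$ as $s\to\theta_i$. Consequently $|R_i(s)|>1$ on a punctured neighborhood of $\theta_i$, i.e.\ $\frac{|f(s)-f(\theta_i)|}{|s-\theta_i|}<|f'(s)|$ there --- the opposite of what the statement asserts. The simplest instance is $f(s)=s^2$ with $\theta=0$: then $\frac{|f(s)-f(0)|}{|s-0|}=|s|$ and $|f'(s)|=2|s|$, so the claimed strict inequality $|s|>2|s|$ holds for no $s$ whatsoever, and no admissible $\Omega_1$ exists.

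The paper's own argument never confronts this point: it treats $\theta_i$ purely as a zero of the numerator $f'$, asserts that the gain $K$ tends to $+\infty$ there and hence that the modulus of the fraction tends to $0$, and concludes $|R_i|<1$ nearby. But $\theta_i$ is a coincident zero and pole of the root locus equation (3.4), and Lemma 1.2 explicitly requires such coincident factors to be cancelled before the ``$K=+\infty$ at zeros'' conclusion applies; after cancellation the gain at $\theta_i$ is the finite nonzero value $\frac{1}{m+1}$, not $+\infty$, so $\theta_i$ is neither a receiving point nor a point where the monotone decrease of $|R_i|$ can be driven below $1$. The gap you identified is therefore genuine and irreparable, and it is present equally in the paper's proof: your instinct that the construction of $\Omega_i$ hinges on whether $\theta_i$ survives cancellation as a zero was correct, and the answer is that it does not.
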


\begin{theorem}
Given a complex polynomial $f$ of degree $d\geq 2$ and a complex number $s$.
For every critical point $\theta_i$ (ie, $f^{'}(\theta_i)=0$), $i=1,2,\cdots, n-1$, there exists its adjacent domain $\Omega_i$.
For all points in the extended complex plane except the adjacent domain $\Omega=\{\Omega_i\}$, $i=1,2,\cdots, n-1$, such that $\frac{\abs{f(s)-f(\theta_i)}}{\abs{s-\theta_i}}\le \abs{f^{'}(s)}$.
\end{theorem}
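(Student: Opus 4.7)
The plan is to reduce the Smale inequality to a statement about the modulus of a single rational function and then apply the root-locus machinery developed in Sections~1--2, culminating in the strict monotonicity of Theorem~3.2. First, for a fixed critical point $\theta_i$, I would rewrite the claimed inequality $\frac{\abs{f(s)-f(\theta_i)}}{\abs{s-\theta_i}}\le\abs{f'(s)}$ in the equivalent form $\left|\frac{f'(s)}{(f(s)-f(\theta_i))/(s-\theta_i)}\right|\ge 1$, using the algebraic identity (already spelled out just before Theorem~3.3) that expresses $(f(s)-f(\theta_i))/(s-\theta_i)$ as a polynomial of degree $n-1$ in $s$. Denote this rational function by $R_i(s)$. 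Its zeros are exactly the critical points of $f$ (in particular $\theta_i$ itself, which forces $R_i$ to vanish there), and its poles are the roots of $\sum_{k=1}^{n}a_k\sum_{jj=1}^{k}s^{jj-1}\theta_i^{k-jj}$.

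Next, I would attach to $R_i$ the root-locus equation (3.4), so that the theory of Section~1 provides a decomposition of $\mathbb{C}\cup\{\infty\}$ into branches of root loci indexed by degree $2q\pi+\alpha$. By Theorem~3.2, the gain $K=1/\abs{R_i(s)}$ is continuous and strictly monotonically increasing along each branch, rising from $0$ at the poles of $R_i$ to $+\infty$ at the zeros of $R_i$. Equivalently, $\abs{R_i(s)}$ strictly and monotonically decreases from $+\infty$ down to $0$ as one travels along any branch toward a critical point $\theta_i$. Combined with Theorem~1.3 (every nonnegative gain value is attained) and Theorem~2.16 (continuity of the loci), this yields a well-defined level function $\abs{R_i(\cdot)}$ on $\mathbb{C}\cup\{\infty\}$ and forces the level set $\{s:\abs{R_i(s)}=1\}$ to separate a small region around each zero of $R_i$ from the rest of the plane.

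I would then \emph{define} the adjacent domain $\Omega_i$ to be the connected component of $\{s\in\mathbb{C}\cup\{\infty\}:\abs{R_i(s)}<1\}$ that contains the critical point $\theta_i$. Strict monotonicity along every branch emanating from $\theta_i$ guarantees that $\Omega_i$ is a genuine open neighborhood of $\theta_i$, and that on the complement of $\Omega=\bigcup_{i=1}^{n-1}\Omega_i$ the opposite inequality $\abs{R_i(s)}\ge 1$ holds for every $i$; rearranging gives precisely the Smale bound $\frac{\abs{f(s)-f(\theta_i)}}{\abs{s-\theta_i}}\le\abs{f'(s)}$ on $(\mathbb{C}\cup\{\infty\})\setminus\Omega$, which is the content of the theorem.

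The main obstacle I anticipate is making the notion of adjacent domain $\Omega_i$ precise and consistent across all degree choices $2q\pi+\alpha$. One must verify that the infinitely many branches of differing degrees that emanate from the same critical point $\theta_i$ all contribute to the same connected open set on which $\abs{R_i}<1$, and that strict monotonicity (Theorem~3.2) forces $\partial\Omega_i$ to coincide with a component of the level set $\abs{R_i}=1$ so that no branch can re-enter $\Omega_i$ after leaving it. A secondary technical point is handling the behaviour at $\infty$ and at poles of $R_i$, where equality versus strict inequality has to be checked carefully; both are covered by Lemma~1.16 and Theorem~2.16 once the monotone level structure of $\abs{R_i}$ on each branch is in hand.
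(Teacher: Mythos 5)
Your proposal reproduces the paper's own route essentially step for step: the same rewriting of the inequality as $\bigl|f'(s)\big/\tfrac{f(s)-f(\theta_i)}{s-\theta_i}\bigr|\ge 1$, the same root-locus equation (3.4), the same appeal to the strict monotonicity of the gain along each branch (Theorem 3.2), and the same idea of carving out ``adjacent domains'' as sublevel regions of the modulus. But there is a genuine gap, and it sits precisely at the one point where you are more explicit than the paper. You assert that the zeros of $R_i(s)=f'(s)\big/\tfrac{f(s)-f(\theta_i)}{s-\theta_i}$ include ``$\theta_i$ itself, which forces $R_i$ to vanish there.'' That is false. Because $\theta_i$ is a critical point, $f(s)-f(\theta_i)$ vanishes to order $m+1$ at $\theta_i$ (with $m$ the multiplicity of $\theta_i$ as a zero of $f'$), so the denominator $\tfrac{f(s)-f(\theta_i)}{s-\theta_i}$ vanishes to order $m$ there --- exactly the order of the numerator. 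The factors cancel and $\lim_{s\to\theta_i}\lvert R_i(s)\rvert=m+1\ge 2>1$. Concretely, for $f(z)=z^3-3z$ and $\theta_1=1$ one has $f(s)-f(1)=(s-1)^2(s+2)$, hence $R_1(s)=\frac{3(s-1)(s+1)}{(s-1)(s+2)}=\frac{3(s+1)}{s+2}$ and $R_1(1)=2$. So $\{s:\lvert R_i(s)\rvert<1\}$ is \emph{not} a neighborhood of $\theta_i$; the component of that set containing $\theta_i$, which is how you define $\Omega_i$, is empty, and the level set $\{\lvert R_i\rvert=1\}$ does not separate a small region around $\theta_i$ from the rest of the plane. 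The sublevel set actually accumulates at the \emph{other} critical points $\theta_j$ with $f(\theta_j)\neq f(\theta_i)$, where $R_i$ genuinely vanishes; with $\Omega_i$ empty, your final claim that $\lvert R_i(s)\rvert\ge 1$ off $\bigcup_i\Omega_i$ would amount to the inequality holding everywhere, which the example refutes near $s=-1$.

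There is a second, structural defect. Even granting the root-locus machinery, strict monotonicity of the gain along each branch only says that each branch crosses the level $\lvert R_i\rvert=1$ exactly once; it gives no control whatsoever on the size, shape, or location of the open set $\{\lvert R_i\rvert<1\}$, and no way to certify that a prescribed point $s$ lies outside $\bigcup_i\{\lvert R_i\rvert<1\}$. In effect the theorem is proved by \emph{defining} $\Omega$ to be the failure set, which renders the statement contentless as regards Smale's problem, where $s$ is arbitrary and one must exhibit \emph{some} critical point for which the bound holds. Both defects are shared by the paper, whose proof of Theorems 3.3 and 3.4 proceeds identically and overlooks the same cancellation at $\theta_i$; so while your proposal faithfully reconstructs the intended argument, neither version establishes the stated inequality.
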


Conflict of Interest: Authors declare that they have no conflict of interest. This article does not contain any studies with human participants or animals performed by any of the authors.

\bibliographystyle{unsrt}

\end{document}